 \newtheorem{thm}{Theorem}[section]
 \newtheorem{cor}[thm]{Corollary}
 \newtheorem{lem}[thm]{Lemma}
 \newtheorem{prop}[thm]{Proposition}
 \theoremstyle{definition}
 \newtheorem{defn}[thm]{Definition}
 \theoremstyle{remark}
 \newtheorem{rem}[thm]{Remark}
 \numberwithin{equation}{section}
\renewcommand{\phi}{\varphi}
\DeclareSymbolFont{SY}{U}{psy}{m}{n}
\DeclareMathSymbol{\emptyset}{\mathord}{SY}{'306}
\DeclareMathOperator{\Ran}{Ran}
\DeclareMathOperator{\ran}{ran}
\DeclareMathOperator{\ind}{ind}
\DeclareMathOperator{\dimk}{dim\,ker}
\DeclareMathOperator{\Tr}{Tr}
\newcommand{\Jcup}{J_\cup}
\DeclareMathSymbol{\newtimes}{\mathbin}{SY}{'264}
\newcommand{\barr}[1]{ \overline {#1} }
\newcommand{\clos}[1]{ {#1}^\text{\,cl}}
\newcommand{\HH}{\wt \cH} 
\newcommand{\wt}{\widetilde}
\renewcommand{\phi}{\varphi}
\newcommand{\R}{\mathbb{R}}
\newcommand{\C}{\mathbb{C}}
\newcommand{\Z}{\mathbb{Z}}
\newcommand{\N}{\mathbb{N}}
\newcommand{\cA}{{\mathcal A}}
\newcommand{\cB}{{\mathcal B}}
\newcommand{\cF}{{\mathcal F}}
\newcommand{\cH}{{\mathcal H}}
\newcommand{\cK}{{\mathcal K}}
\newcommand{\cL}{{\mathcal L}}
\newcommand{\cO}{{\mathcal O}}
\newcommand{\cP}{{\mathcal P}}
\newcommand{\cS}{{\mathcal S}}
\newcommand{\cT}{{\mathcal T}}
\newcommand{\eps}{\varepsilon} 
\newcommand{\la}{\lambda}
\newcommand{\Om}{\Omega}
\newcommand\beqn{\begin{equation}}
\newcommand\neqn{\end{equation}}
\newcommand\cPfin{{\mathcal P}_\text{fin}}
\newcommand{\1}{\mathbbm 1}
\DeclareMathAlphabet{\Ma}{U}{msa}{m}{n}
\DeclareMathAlphabet{\Mb}{U}{msb}{m}{n}
\DeclareMathAlphabet{\Meuf}{U}{euf}{m}{n}
\DeclareSymbolFont{ASMa}{U}{msa}{m}{n}
\DeclareSymbolFont{ASMb}{U}{msb}{m}{n}
\DeclareMathSymbol{\hrist}{\mathord}{ASMa}{"16}
\DeclareMathSymbol{\varkappa}{\mathalpha}{ASMb}{"7B}
\DeclareMathSymbol{\CrPr}{\mathord}{ASMb}{"6F}
\def\got#1{\Meuf{#1}}
\def\ot #1.{{\got{#1}}}
\DeclareSymbolFont{SY}{U}{psy}{m}{n}
\DeclareMathSymbol{\emptyset}{\mathord}{SY}{'306}
\begin{document}

%
%
%
%
%
%
%
%
%

\title[F\o lner sequences]
 {F\o lner sequences in operator theory \\ 
   and operator algebras}


\author{Pere Ara}
\address{Department of Mathematics,
  Universitat Aut\`onoma de Barcelona, 08193 Bellaterra (Bar\-ce\-lona), Spain}
\email{para@mat.uab.cat}

\author{Fernando Lled\'o}
\address{Department of Mathematics, University Carlos~III Madrid,
  Avda.~de la Universidad~30, E-28911 Legan\'es (Madrid), Spain
  and Instituto de Ciencias Matem\'{a}ticas (CSIC - UAM - UC3M - UCM)}
\email{flledo@math.uc3m.es}

\author{Dmitry V. Yakubovich}
\address{Departamento de Matem\'{a}ticas,
Universidad Autonoma de Madrid, Cantoblanco 28049 (Ma\-drid) Spain
\enspace and
\phantom{r} Instituto de Ciencias
Matem\'{a}ticas (CSIC - UAM - UC3M - UCM)}
\email{dmitry.yakubovich@uam.es}

\thanks{
The first-named author was partially supported by
the project MTM2011-28992-C02-01
of the Spanish Ministry of Science and Innovation (MICINN), also  
supported by FEDER, and by the Comissionat per Universitats i
Recerca de la Generalitat de Catalunya. The second-named
and third-named authors were partly supported by the MICINN
and FEDER projects MTM2012-36372-C03-01 and, respectively,  
MTM2008-06621-C02-01, MTM2011-28149-C02-1.
The last two authors also acknowledge the support by the ICMAT  
Severo Ochoa project SEV-2011-0087.
}

\keywords{F\o lner sequences, non-normal operators, essentially normal operators,
C*-algebra, amenable trace, spectral approximationx}

\date{\today}

\begin{abstract}
The present article is a review of recent developments concerning the notion of
F\o lner sequences both in operator theory and operator algebras. We also give a new
direct proof that any essentially normal operator has an increasing F\o lner
sequence $\{P_n\}$ of non-zero finite rank projections that strongly converges to $\1$.
The proof is based on Brown-Douglas-Fillmore theory.
We use F\o lner sequences to analyze the class of finite operators introduced by Williams in 1970.
In the second part of this article we examine
a procedure of approximating any amenable trace on a unital and separable C*-algebra by tracial states 
$\mathrm{Tr}(\cdot P_n)/\mathrm{Tr}(P_n)$
corresponding to a F\o lner sequence and apply this method to improve spectral approximation results
due to Arveson and B\'edos. The article concludes with the analysis of C*-algebras admitting a non-degenerate
representation which has a F\o lner sequence or, equivalently, an
amenable trace. We give an abstract characterization of these algebras in terms of unital
completely positive maps and define F\o lner
C*-algebras as those unital separable C*-al\-ge\-bras that satisfy these
equivalent conditions. This is analogous to Voiculescu's abstract
characterization of quasidiagonal C*-algebras.
\end{abstract}

\maketitle
\section{Introduction}

In their beginnings the single operator theory and the theory of operator algebras
were a common subject and shared many techniques. As an example recall that
von Neumann algebras were introduced (as {\em rings of operators})
in 1929 by von Neumann in his second paper on spectral theory \cite{vNeumann29}.
In recent times, however, each
of these theories has developed own elaborated techniques which in many cases
remain unknown to experts of the other area. Nevertheless single operator theory
and the theory of operator algebras have also had fruitful and important interactions ever since.
Brown, Douglas and Fillmore's theory was motivated by the classification of essentially normal
operators and ended with the introduction of the $Ext$-group as a fundamental invariant for operator
algebras. Finally, Voiculescu's work on quasidiagonality also shows the importance of the dialog
between these communities (cf.~\cite{bDavidson96,Voiculescu93,voicu91,Voiculescu76}).

In more recent times operator al\-ge\-bra techniques, in particular exact C*-al\-ge\-bras,
have also been used to solve Herrero's approximation problem
for quasidiagonal operators (cf.~\cite{Brown01}). Moreover, operator algebras have shown to
be a useful tool in order to address problems in spectral approximation:
given a sequence of linear operators
$\{T_n\}_{n\in\N}$ in a complex separable Hilbert space $\cH$ that approximates
an operator $T$ in a suitable sense, a natural question is how do the spectral
characteristics of $T$ (the spectrum, spectral measures, numerical ranges, pseudospectra etc.)
relate with those of $T_n$ as $n$ grows.
(Excellent books that include a large number of examples and references
are, e.g., \cite{bChatelin83,bAhues01}. See also
\cite{Boettcher00,bHagen01} for the application of C*-algebra techniques in
numerical analysis.)
Arveson's seminal series of articles
\cite{Arveson93,Arveson94,ArvesonIn94} on this topic were directly inspired by
Szeg\"o's classical approximation theorem for Toeplitz operators.
Among other interesting results, Arveson
gave conditions that guarantee that the essential spectrum of a
large class of selfadjoint
operators $T$ may be recovered from the sequence of eigenvalues
of certain finite dimensional compressions $T_n$. These results were then
refined by B\'edos who systematically applied the concept of F\o lner sequence
of non-zero finite rank projections
to spectral approximation problems (see \cite{Bedos94,Bedos95,Bedos97}
as well as \cite{bHagen01,pLledo11}; for a precise definition of F\o lner sequence
and additional results we refer to Section~\ref{sec:basic}).
It is stated in Section~7.2 of \cite{bHagen01} that SeLegue also considered
Szeg\"o-type theorems for Toeplitz operators in the context of C*-algebras.
Hansen extends some of the mentioned results
to the case of unbounded operators (cf.~\cite[\S~7]{Hansen08}; see also
\cite{Hansen11} for recent developments in the non-selfadjoint case).
Brown shows in \cite{Brown06b} that abstract
results in C*-algebra theory can be applied to compute spectra of important
operators in mathematical physics like
almost Mathieu operators or periodic magnetic Schr\"odinger operators on graphs.

In the last two decades, the relation between spectral approximation problems and F\o lner sequences
for non-selfadjoint and non-normal operators has been also
explored, see for instance \cite{Widom94,Tilli99,Boettcher05,Roch07}.

The aim of this article is to present in a single publication recent operator theoretic
and operators algebraic results that involve the notion of F\o lner sequences for
operator. F\o lner sequences were introduced
in the context of operator algebras by Connes in Section~V of his seminal
paper \cite{Connes76} (see also \cite[Section~2]{ConnesIn76}).
This notion is an algebraic analogue of F\o lner's characterization
of amenable discrete groups and was used by Connes as an essential tool in the
classification of injective type~II$_1$ factors.
Part of the material of this paper is taken from \cite{AraLledo12, pLledoYak12}.
There is though a new and complete proof that any essentially normal operator
has a proper F\o lner sequence (cf.~Subsection~\ref{subsec:ess-normal}) which,
in our opinion, is interesting in its own right.
The proof is based on the absorbing property for direct sums
stated in Proposition~\ref{localization} and pure operator theoretic arguments including
Brown-Douglas-Fillmore theory.

In this article we will present
(with the exception of Subsection~\ref{subsec:ess-normal})
only short proofs that improve the comprehension of the statement or that contain useful
techniques. For more difficult and elaborate arguments we will refer to the original publications.
Section~\ref{sec:operators} is completed with the analysis of the relations between the class
of finite operators (introduced by Williams in \cite{Williams70})
and the notion of F\o lner sequence. It shown that F\o lner sequences for
operators provide a very useful and natural tool to analyze this class
of operators. In the last section we will study the role of F\o lner sequences in operator
algebras. First we review the relation between F\o lner sequences for a unital and separable
C*-algebra $\cA$ and amenable traces. In particular, we present
an approximation procedure for amenable traces in terms of F\o lner sequences of projections
\cite[Theorem 6.1]{Ozawa04} (see also \cite[Theorem 6.2.7]{bBrown08}). We apply this method
in Theorem~\ref{teo:apply} to extend a spectral approximation result for scalar spectral measures
in the spirit of Arveson and B\'edos. In Subsection~\ref{sbsc:Foe-alg} we give finally
an abstract characterization of unital separable
C*-algebras $\cA$ admitting a non-degenerate representation
$\pi$ on a Hilbert space such that there is a F\o lner sequence for
$\pi(\cA)$ or, equivalently, such that $\pi(\cA)$ has an amenable
trace (see Theorem \ref{thm:charFoe}). We conclude with a brief
discussion of C*-algebras that can also be related to a given
F\o lner sequence and that appear naturally in the context of spectral approximation
problems. In the last section we summarize some of the main relations and differences in the 
analysis of F\o lner sequences for single operators and for abstract C*-algebras.
\smallskip

\textbf{Notation:}
We will denote by $\cL(\cH)$ the C*-algebra of bounded and linear
operators on the complex separable Hilbert space $\cH$,
and by $\cK(\cH)$ the ideal of compact operators on $\cH$.
Next, $\cPfin(\cH)$ is the set of all non-zero
finite rank orthogonal projections on $\cH$ and  $[A,B]:=AB-BA$
stands for the commutator of two operators $A,B\in\cL(\cH)$. We denote by
$\mathrm{Tr}(\cdot)$ the standard trace on $\cL(\cH)$ and
by $\mathrm{tr}(\cdot)$ the unique tracial state on a matrix algebra
$M_{n}(\C)$, $n\in\N$.

\section{Basic properties of F\o lner sequences for operators}\label{sec:basic}

The notion of F\o lner sequences for operators has its origins in
group theory. Recall that a discrete countable group $\Gamma$ is
said to be \textit{amenable} if it has an invariant mean, i.e., there is a positive
linear functional $\psi$ on the von Neumann algebra\footnote{We
identify here each $f\in \ell^\infty(\Gamma)$ with the multiplication
operator with $f$ on the Hilbert space $\ell_2(\Gamma)$.}
$\ell^\infty(\Gamma)$ with norm one such
that
\[
  \psi(\gamma f)=\psi(f)\;,\quad \gamma\in\Gamma\;,\quad f\in \ell^\infty(\Gamma)\;,
\]
where $(\gamma f)(\gamma_0):=f(\gamma^{-1}\gamma_0)$.
Abelian groups, finite groups and their extensions are amenable.
A F{\o}lner sequence for the group $\Gamma$ is a sequence of non-empty finite
subsets $\Gamma_i\subset\Gamma$ that satisfy
\begin{equation}\label{eq:foelner-g}
\lim_{i}
\frac{|(\gamma \Gamma_i)\triangle
\Gamma_i|}{|\Gamma_i|} =0\qquad\text{for all}\quad \gamma\in\Gamma \,,
\end{equation}
where $\triangle$ denotes the symmetric difference and $|X|$ is the
cardinality of a set $X$. Then, $\Gamma$ has a F{\o}lner
sequence if and only if $\Gamma$ is amenable (see, e.g., Chapter~4 in
\cite{bPaterson88}).
An analysis of different properties of approximability of a group by finite groups and
their relation to amenability has been undertaken in the review \cite{Vershik82}.

The counterpart of the preceding definition in the context of operators
is given next. First we need to recall that if $T\in\cL(\cH)$, then
$\|T\|_p$, $p=1,2,\dots$, is its norm in the Schatten-von Neumann
class.

\begin{defn}\label{def:Foelner}
Let $\mathcal{T}\subset\cL(\cH)$ be a set of operators. A sequence of non-zero
finite rank orthogonal projections $\{P_n\}_{n\in \N}\subset\cPfin(\cH)$ on $\cH$
is called a {\em F{\o}lner sequence for $\mathcal{T}$} if
\begin{equation}\label{eq:F1}
\lim_{n\to\infty} \frac{\|T P_n-P_n T\|_2}{\|P_n\|_2} = 0\;\;,\quad T\in\cT\; .
\end{equation}
If the F\o lner sequence $\{P_n\}_{n\in \N}$ satisfies, in addition, that it is increasing
and converges strongly to $\1$, then we say it is a {\em proper F\o lner sequence for $\cT$}.
\end{defn}

The existence of a F\o lner sequence has already important structural consequences, see for instance
Proposition \ref{pro:hypertrace} and Corollary \ref{cor:fin=Fol_seq} below.
Notice, however, that proper F\o lner sequences are important in the context of spectral approximation
in the spirit of works \cite{Widom94, Tilli99, Roch07} and others.

In the preceding definition we have not specified any structure on the set of operators $\cT$.
Typically, $\cT$ will be a single operator or a concrete C*-algebra, realized in a Hilbert space
$\cH$. The next result collects some immediate consequences of the definition of a
F\o lner sequence for operators. Part (ii) is shown in Lemma~1 of \cite{Bedos97}
(see also \cite[Proposition~2.1]{pLledoYak12}).

\begin{prop}\label{pro-1}
Let $\cT\subset\cL(\cH)$ be a set of operators and $\{P_n\}_{n\in \N}\subset\cPfin(\cH)$
a sequence of non-zero finite rank orthogonal projections. Then we have
\begin{itemize}
 \item[(i)] $\{P_n\}_{n\in \N}$ is a F\o lner sequence for $\cT$ if and only if it is a F\o lner sequence for
$C^*\big(\,\cT\,,\,\,\1 \big)$, where $C^*(\cdot)$ is the C$^*$-algebra generated by its argument. Moreover,
$\{P_n\}_{n\in \N}$ is a proper F\o lner sequence for $\cT$ if and only if it is a proper F\o lner sequence for
\[
  C^*\big(\,\cT\,,\, \cK(\cH)\,,\,\1 \big)\;.
\]
\item[(ii)]
$\{P_n\}_{n\in \N}$ is a F{\o}lner sequence for $\cT$ if and only if
the following condition holds:
\begin{equation}\label{F1}
\lim_{n\to\infty} \frac{\|T P_n-P_n T\|_1}{\|P_n\|_1} = 0\; ,\quad T\in\cT\;.
\end{equation}
If $\cT$ is a self-adjoint set (i.e., $\cT^*=\cT$), then
$\{P_n\}_{n\in \N}$ is a F{\o}lner sequence for $\cT$ if and only if
for all $T\in\cT$:
\begin{equation}\label{F2}
\lim_{n\to\infty} \frac{\|(I-P_n)T P_n\|_p}{\|P_n\|_p} = 0,\qquad
p\in\{1,2\}\;.
\end{equation}
\item[(iii)]
Let $T\in\cL(\cH)$ and
$\{Q_n\}_n\subset\cPfin(\cH)$ be such that the sequence $\{\dim Q_n\}$ is unbounded and
\[
\lim_{n\to\infty} \frac{\|T Q_n-Q_nT\|_2}{\|Q_n\|_2} = 0\;.
\]
Then there exists a proper F\o lner sequence for $T$.
\end{itemize}
\end{prop}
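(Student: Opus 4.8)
The plan is to notice first that the stated hypothesis on $\{Q_n\}$ is exactly the defining F\o lner condition \eqref{eq:F1}, so $\{Q_n\}$ is already a F\o lner sequence for $T$; what must be produced is a \emph{proper} one, that is, a F\o lner sequence of non-zero finite rank projections which is in addition increasing and strongly convergent to $\1$. To this end I would fix an orthonormal basis of $\cH$ and let $E_k\in\cPfin(\cH)$ be the projection onto the span of its first $k$ vectors, so that $E_k$ increases strongly to $\1$. The target sequence $\{P_k\}$ will be built by induction so that $P_{k-1}\le P_k$ and $E_k\le P_k$ for all $k$: the first relation makes it increasing, and the second forces $P_k\strongtends\1$, since $\|v-P_kv\|\le\|v-E_kv\|\to0$ for every $v\in\cH$.

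The whole difficulty is to keep the F\o lner property under these two extra constraints, and for this I would rely on the crude estimate
\begin{equation*}
  \|[T,W]\|_2 \le 2\|T\|\,\|W\|_2 = 2\|T\|\sqrt{\dim W},\qquad W\in\cPfin(\cH),
\end{equation*}
which follows from $\|[T,W]\|_2\le\|TW\|_2+\|WT\|_2$ together with $\|TW\|_2,\|WT\|_2\le\|T\|\,\|W\|_2$. In the inductive step, assuming $P_{k-1}$ already chosen with $d_{k-1}:=\dim P_{k-1}$, I would use that $\{\dim Q_n\}$ being unbounded makes the set $\{n:\dim Q_n\ge M\}$ infinite for every $M$; combined with \eqref{eq:F1} this lets me pick an index $n_k$ with both $\dim Q_{n_k}\ge k\,(d_{k-1}+k)$ and $\|[T,Q_{n_k}]\|_2/\|Q_{n_k}\|_2<1/k$. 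I then set $P_k:=P_{k-1}\vee Q_{n_k}\vee E_k$.

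To close the induction, note that $Q_{n_k}\le P_k$, so $W_k:=P_k-Q_{n_k}$ is a projection orthogonal to $Q_{n_k}$, and subadditivity of rank under joins gives $\dim W_k=\dim P_k-\dim Q_{n_k}\le d_{k-1}+k$. Writing $[T,P_k]=[T,Q_{n_k}]+[T,W_k]$, bounding $[T,W_k]$ by the displayed estimate, and using $\|P_k\|_2\ge\|Q_{n_k}\|_2$, one arrives at
\begin{equation*}
  \frac{\|[T,P_k]\|_2}{\|P_k\|_2}
  \le \frac{\|[T,Q_{n_k}]\|_2}{\|Q_{n_k}\|_2}
     + \frac{2\|T\|\sqrt{d_{k-1}+k}}{\sqrt{\dim Q_{n_k}}}
  < \frac1k + \frac{2\|T\|}{\sqrt k},
\end{equation*}
which tends to $0$ as $k\to\infty$, so $\{P_k\}$ is a proper F\o lner sequence for $T$. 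The main obstacle --- and the only point where unboundedness of $\{\dim Q_n\}$ enters --- is precisely the selection step: one needs a genuine F\o lner piece $Q_{n_k}$ whose dimension overwhelms the fixed rank $d_{k-1}+k$ of the contaminating part $W_k$ inherited from the previous term and from $E_k$. Once that domination is secured, the extra rank added to enforce monotonicity and strong convergence becomes negligible in the relative Hilbert--Schmidt norm, and the F\o lner property survives.
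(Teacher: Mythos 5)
Your argument for part (iii) is correct and complete as far as it goes. The crude bound $\|[T,W]\|_2\le 2\|T\|\,\|W\|_2=2\|T\|\sqrt{\dim W}$ is valid, the selection of $n_k$ with $\dim Q_{n_k}\ge k(d_{k-1}+k)$ and $\|[T,Q_{n_k}]\|_2/\|Q_{n_k}\|_2<1/k$ is legitimate (the first condition holds on an infinite set of indices, the second on a cofinite one), the splitting $P_k=Q_{n_k}+W_k$ with $\rank W_k\le d_{k-1}+k$ and $\|P_k\|_2\ge\|Q_{n_k}\|_2$ gives exactly the displayed estimate, and the constraints $P_{k-1}\le P_k$ and $E_k\le P_k$ do force monotonicity and strong convergence to $\1$. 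The paper itself gives no proof of this item (it points to \cite{pLledoYak12}), so your construction stands as a clean self-contained substitute for that reference.

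The genuine gap is one of coverage: the Proposition has three parts and you prove only the third. Part (i) requires showing that, for a fixed sequence $\{P_n\}$, the set of operators satisfying the F\o lner condition is a norm-closed unital $*$-subalgebra of $\cL(\cH)$; this rests on $\|[T^*,P]\|_2=\|[T,P]\|_2$, the Leibniz-type estimate $\|[ST,P]\|_2\le\|S\|\,\|[T,P]\|_2+\|T\|\,\|[S,P]\|_2$, and $\|[T,P]\|_2\le\|[T',P]\|_2+2\|T-T'\|\,\|P\|_2$ for passing to norm limits, together with (for the ``proper'' half) the observation that $\|(\1-P_n)K\|\to 0$ for every compact $K$ when $P_n\strongtends\1$, so that any proper F\o lner sequence automatically handles $\cK(\cH)$. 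Part (ii), the equivalence of the trace-norm and Hilbert--Schmidt-norm conditions, is not a formality either: one needs H\"older-type inequalities such as $\|(\1-P)TP\|_1\le\|(\1-P)TP\|_2\,\|P\|_2$ and $\|(\1-P)TP\|_2^2\le\|T\|\,\|(\1-P)TP\|_1$, combined with the decomposition $[T,P]=(\1-P)TP-PT(\1-P)$ in the self-adjoint case; this is precisely Lemma~1 of \cite{Bedos97}, which the paper cites. None of this appears in your proposal, so as a proof of the Proposition as stated it is incomplete, even though the portion you did write is sound.
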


Proper F\o lner sequences for operators have the following characteristic
absorbing property for direct sums:

\begin{prop}\label{localization}
Let $\cH$ and $\cH'$ be separable Hilbert spaces with $\dim\cH=\infty$.
If $T$ has a proper F\o lner sequence, then $T\oplus X\in\cL(\cH\oplus\cH')$
has a proper F\o lner sequence for {\em any} $X\in\cL(\cH')$.
\end{prop}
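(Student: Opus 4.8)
The plan is to build a proper F\o lner sequence for $T\oplus X$ out of block-diagonal projections $\wt P_n=P_n\oplus Q_n$, where $\{P_n\}\subset\cPfin(\cH)$ is the given proper F\o lner sequence for $T$ and $\{Q_n\}\subset\cPfin(\cH')$ is an auxiliary increasing sequence converging strongly to $\1_{\cH'}$, chosen to grow in rank much more slowly than $\{P_n\}$. The case $\cH'=\{0\}$ is trivial, so assume $\cH'\neq\{0\}$. The structural fact I would extract first is that $r_n:=\rank P_n\to\infty$: since $\{P_n\}$ is increasing and converges strongly to $\1_\cH$ on the infinite-dimensional space $\cH$, its ranks cannot stay bounded, and being monotone they diverge. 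This divergence is exactly the room the argument needs.

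Next I would fix any increasing sequence $\{R_m\}_{m\ge1}\subset\cPfin(\cH')$ with $R_m\to\1_{\cH'}$ strongly (possible since $\cH'$ is separable), put $s_m:=\rank R_m$, and select a non-decreasing index function $m(n)\to\infty$ for which $s_{m(n)}/r_n\to0$; concretely one may take $m(n)$ to be the largest $m$ with $s_m\le\sqrt{r_n}$ (and $Q_n:=R_1$ for the finitely many initial $n$, if any, where no such $m$ exists). Setting $Q_n:=R_{m(n)}$, the sequence $\{Q_n\}$ is increasing, converges strongly to $\1_{\cH'}$, and satisfies $\rank Q_n/r_n=s_{m(n)}/r_n\le 1/\sqrt{r_n}\to0$. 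Consequently $\wt P_n=P_n\oplus Q_n$ is a sequence of non-zero finite rank projections on $\cH\oplus\cH'$ that is increasing and converges strongly to $\1_{\cH\oplus\cH'}$, i.e. it has the two properties required of a \emph{proper} sequence; it remains to verify the F\o lner condition \eqref{eq:F1} for $T\oplus X$.

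For the estimate I would use that commutators respect the block decomposition,
\[
(T\oplus X)\wt P_n-\wt P_n(T\oplus X)=(TP_n-P_nT)\oplus(XQ_n-Q_nX),
\]
together with $\|\wt P_n\|_2=\sqrt{r_n+s_{m(n)}}\ge\sqrt{r_n}=\|P_n\|_2$ and the crude bound $\|XQ_n-Q_nX\|_2\le 2\|X\|\,\|Q_n\|_2=2\|X\|\sqrt{s_{m(n)}}$. These give
\[
\frac{\|(T\oplus X)\wt P_n-\wt P_n(T\oplus X)\|_2}{\|\wt P_n\|_2}
\le\frac{\|TP_n-P_nT\|_2}{\|P_n\|_2}+2\|X\|\sqrt{\frac{s_{m(n)}}{r_n}}.
\]
The first summand tends to $0$ because $\{P_n\}$ is F\o lner for $T$, and the second because $s_{m(n)}/r_n\to0$ by construction; hence $\{\wt P_n\}$ is a proper F\o lner sequence for $T\oplus X$.

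The only real subtlety, and the step I would be most careful about, is the joint choice of the $Q_n$: they must increase all the way up to $\1_{\cH'}$ so that $\{\wt P_n\}$ is proper, yet their ranks must be negligible compared with $r_n$, since $X$ is arbitrary and the best a priori control on its commutator with $Q_n$ is the dimensional bound $2\|X\|\sqrt{\rank Q_n}$. The divergence $r_n\to\infty$ forced by $\dim\cH=\infty$ is what makes both demands compatible; without it the argument would break down.
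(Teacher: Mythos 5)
Your proof is correct and follows essentially the same route as the paper: both form the block-diagonal projections $P_n\oplus Q_n$ with $\{Q_n\}$ exhausting $\cH'$ but with $\rank Q_n$ negligible compared to $\rank P_n$, and both control the $X$-block by the crude bound $\|[X,Q_n]\|_2\le 2\|X\|\,\|Q_n\|_2$. The only cosmetic difference is that the paper passes to a subsequence so that $\rank P_n\ge n^2$ while taking $\rank Q_n=n$, whereas you keep $\{P_n\}$ fixed and slow down $\{Q_n\}$ instead.
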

\begin{proof}
Let $\{P_n\}_{n\in \N}$ be a  proper F\o lner sequence for $T$ and since
the sequence of projections is increasing we may assume
that $\dim P_n\cH\geq n^2\;$. Let $\{e_i\mid i\in\N\}$ be an orthonormal
basis of $\cH'$ and denote by $Q_n$ the orthogonal projection onto
span$\{e_1,\dots,e_n\}\subset\cH'$.
Then the following calculation shows that $\{P_n\oplus Q_n\}_n$ is a proper F\o lner sequence
for $T\oplus X$, $X\in\cL(\cH')$:
\begin{eqnarray*}
\frac{ \left\| \big[ T\oplus X,P_n\oplus Q_n\big]\right\|_2^2}{\|P_n\oplus Q_n\|^2_2}
  &=& \frac{\left\|[T,P_n]\right\|^2_2+\left\|[X,Q_n]\right\|_2^2}{\|P_n\|^2_2+n}\\
  &\leq &  \frac{\left\|[T,P_n]\right\|^2_2}{\|P_n\|^2_2}
          +\frac{4\left\|Q_n\right\|^2_2\;\left\|X\right\|^2}{n^2+n} \\
  &=& \frac{\left\|[T,P_n]\right\|^2_2}{\|P_n\|^2_2}
          +4\left\|X\right\| \frac{n}{n^2+n} \quad\mathop{\longrightarrow}\limits_{n\to\infty}\quad 0\,.
          \qquad  \qedhere
\end{eqnarray*}
\end{proof}

Next, we mention some first operator algebraic consequences related to the existence of
F\o lner sequences. For this we need to recall the following notion:
\begin{defn}
A state $\tau$ on the unital C*-algebra $\cA\subset\cL(\cH)$
(i.e., a positive and normalized linear functional on $\cA$) is called an
\textit{amenable trace}
if there exists a state $\psi$ on $\cL(\cH)$ such that
$\psi\upharpoonright\cA=\tau$ and
\begin{equation*}
\psi(X A) = \psi(A X)\;,\quad X\in \cL(\cH)\;,\;A\in\cA\,.
\end{equation*}
The state $\psi$ is also referred in the literature as a
\textit{hypertrace for} $\cA$.
\end{defn}
Note that an amenable trace is really a trace on $\cA$ (i.e.,
$\tau(AB)=\tau(BA)$, $A,B\in\cA$). We also refer to \cite{Brown06a,Ozawa04} for a
thorough description of the relations of amenable traces and F\o lner
sequences to other important areas like, e.g., Connes' embedding problem.
Hypertraces are the algebraic analogue of the invariant mean
on groups mentioned at the beginning of this section.
Later we will need the following standard result.
(See \cite{Connes76,ConnesIn76} for
the original statement and more results in the context of operator
algebras; see also \cite{Bedos95,AraLledo12} for additional results in the
context of C*-algebras related to the existence of a hypertrace.)
\begin{prop}\label{pro:hypertrace}
  Let $\cA\subset\cL(\cH)$ be a separable unital C*-algebra. Then
 $\cA$ has a F\o lner sequence if and only if $\cA$ has an amenable trace.
\end{prop}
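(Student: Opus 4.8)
The plan is to prove both implications separately, establishing the equivalence between the existence of a F\o lner sequence and the existence of an amenable trace for a separable unital C*-algebra $\cA\subset\cL(\cH)$. For the direction ``F\o lner sequence $\Rightarrow$ amenable trace,'' suppose $\{P_n\}_{n\in\N}$ is a F\o lner sequence for $\cA$. The natural candidate for a hypertrace is to extract a weak-* limit point of the sequence of normalized functionals
\begin{equation*}
  \psi_n(X):=\frac{\Tr(X P_n)}{\Tr(P_n)}=\frac{\Tr(XP_n)}{\dim P_n\cH}\,,\qquad X\in\cL(\cH)\,.
\end{equation*}
Each $\psi_n$ is a state on $\cL(\cH)$, and since the state space of $\cL(\cH)$ is weak-* compact (by Banach--Alaoglu applied to the unit ball of the dual), there is a subnet converging weak-* to some state $\psi$ on $\cL(\cH)$. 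First I would verify that $\psi$ is a hypertrace for $\cA$: for $A\in\cA$ and $X\in\cL(\cH)$, the commutator estimate
\begin{equation*}
  |\psi_n(XA)-\psi_n(AX)|=\frac{|\Tr\big(X[A,P_n]\big)|}{\Tr(P_n)}
  \le\frac{\|X\|\,\|[A,P_n]\|_1}{\|P_n\|_1}
\end{equation*}
tends to $0$ by the trace-norm form of the F\o lner condition in Proposition~\ref{pro-1}(ii), so passing to the limit along the subnet gives $\psi(XA)=\psi(AX)$. Restricting $\psi$ to $\cA$ yields the amenable trace $\tau:=\psi\rest\cA$.

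For the converse direction ``amenable trace $\Rightarrow$ F\o lner sequence,'' suppose $\cA$ has a hypertrace $\psi$ on $\cL(\cH)$. The strategy is to use the separability of $\cA$ to reduce the tracial condition to a countable family of approximate commutation requirements and then realize these by finite-rank projections. Concretely, I would fix a countable dense sequence $\{A_k\}_{k\in\N}$ in $\cA$ and argue that, because $\psi$ is a limit (in an appropriate sense) of normal states on $\cL(\cH)$, one can approximate it by vector or finite-rank states. The key tool here is that $\psi$, being a state on $\cL(\cH)$ that is tracial on $\cA$, can be approximated in the weak-* topology on the separable C*-subalgebra $C^*(\cA,\cK(\cH),\1)$ by states of the form $\Tr(\cdot\,P)/\Tr(P)$ with $P$ finite rank; this is essentially the content of the quasidiagonality/amenable-trace machinery (cf.~\cite{Brown06a,Ozawa04,bBrown08}). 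From such approximating projections, for each $n$ one selects $P_n$ so that $\|[A_k,P_n]\|_1/\|P_n\|_1<1/n$ for $k=1,\dots,n$, and a density argument extends the F\o lner estimate from the dense sequence to all of $\cA$.

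The main obstacle is the converse direction. The forward implication is a routine weak-* compactness argument, but going back from an abstract hypertrace to an explicit F\o lner sequence of \emph{finite-rank} projections requires genuinely realizing the trace as an asymptotic average over finite-dimensional subspaces. The difficulty is that $\psi$ is in general a singular state (it vanishes on $\cK(\cH)$ when $\cA$ is, say, simple and infinite-dimensional), so one cannot directly represent it by a single density matrix; one must instead produce a sequence of finite-rank projections whose normalized traces approximate $\psi$ on the countable dense set. This is precisely the nontrivial approximation result for amenable traces, and I would invoke it as a black box (referring to \cite[Theorem~6.1]{Ozawa04} or \cite[Theorem~6.2.7]{bBrown08}) rather than reconstructing it, since the full argument uses Voiculescu-type quasidiagonality techniques that lie outside a short self-contained proof.
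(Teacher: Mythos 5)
Your proposal is correct and matches the route the paper itself takes: the forward direction via the standard weak-* compactness argument on the states $\Tr(\cdot\,P_n)/\Tr(P_n)$ combined with the trace-norm form of the F\o lner condition from Proposition~\ref{pro-1}(ii), and the converse by invoking the Day--Namioka/Powers--St{\o}rmer approximation machinery for amenable traces from \cite{Connes76,Bedos95,Ozawa04,bBrown08}, which is exactly what the paper does (it states the proposition without proof and defers both directions to these references, later repeating the same division of labor in Proposition~\ref{exercise}). The only point worth noting is that the black box you cite does not require the hypothesis $\cA\cap\cK(\cH)=\{0\}$ for the mere existence of a F\o lner sequence --- that extra assumption enters only in the refined statement where the projections must also recover the given trace $\tau$ as in Proposition~\ref{exercise}(ii).
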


Finally, we also mention the following useful results in the context of single operator theory.
We need to introduce first the following definition.
\begin{defn}
We say that $T\in \cL(\cH)$ is
{\em finite block reducible} if $T$ has a non-trivial finite-dimensional reducing
subspace, i.e., there is an orthogonal decomposition $\cH=\cH_0\oplus \cH_1$
which reduces $T$ and where $\cH_0$ is finite dimensional and non-zero.
\end{defn}

The following two propositions are technical and we refer to Section~3 in \cite{pLledoYak12}
for a complete proof.

\begin{prop}\label{prop:decomp}
Let $T= T_0\oplus \wt T$ on $\cH=\cH_0\oplus\HH$, where $\dim \cH_0 < \infty$.
Then, $T$ has a proper F\o lner sequence if and only if $\wt T$ has a proper F\o lner sequence.
\end{prop}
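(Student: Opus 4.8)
The plan is to treat the two implications separately: the easy direction follows from the absorption principle of Proposition~\ref{localization}, while the substantial direction rests on a spectral-projection construction fed into Proposition~\ref{pro-1}(iii). Throughout I may assume $\dim\cH=\infty$, since when $\cH$ is finite dimensional both $T$ and $\wt T$ carry the constant proper F\o lner sequence $P_n=\1$; as $\dim\cH_0<\infty$, this assumption forces $\dim\HH=\infty$.

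For the implication ``$\wt T$ has a proper F\o lner sequence $\Rightarrow$ $T$ has one'', I would apply Proposition~\ref{localization} with $\wt T$ in the role of $T$ (acting on the infinite-dimensional space $\HH$) and $T_0$ in the role of $X$, obtaining a proper F\o lner sequence for $\wt T\oplus T_0$. Since $\wt T\oplus T_0$ is unitarily equivalent to $T_0\oplus\wt T=T$ via the coordinate flip, and since proper F\o lner sequences are preserved under unitary equivalence (the Hilbert--Schmidt norm, the rank, monotonicity, and strong convergence to the identity are all unitarily invariant), conjugating yields the desired sequence for $T$.

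The real content is the converse. Let $\{P_n\}$ be a proper F\o lner sequence for $T$, write $E_0$ for the projection onto $\cH_0$, of rank $d:=\dim\cH_0$, and set $\wt E=\1-E_0$; note $T$ commutes with both. The obstacle is that the natural candidate $D_n:=\wt E P_n\wt E$ is merely a compression, not a projection. The key observation is that it is a uniformly finite-rank perturbation of a genuine projection: writing $P_n$ in $2\times2$ block form and using $P_n^2=P_n$ gives $D_n-D_n^2=B_n^*B_n$ with $B_n=E_0P_n\wt E$ of rank at most $d$, so $\rank(D_n-D_n^2)\le d$ and $D_n$ has at most $d$ spectral values in $(0,1)$. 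I would therefore take $Q_n:=\1_{[1/2,1]}(D_n)$ on $\HH$. A comparison of traces gives $\rank Q_n\ge\Tr(D_n)-d\ge\rank P_n-2d$, so $\{\dim Q_n\}$ is unbounded and $\|Q_n\|_2/\|P_n\|_2\to1$, while $\|D_n-Q_n\|_2\le\sqrt d$.

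To verify the F\o lner estimate I would use the identity $[\wt T,D_n]=\wt E\,[T,P_n]\,\wt E$, which follows from $T\wt E=\wt E T$ and yields $\|[\wt T,D_n]\|_2\le\|[T,P_n]\|_2$. Combining this with the spectral approximation gives $\|[\wt T,Q_n]\|_2\le\|[T,P_n]\|_2+2\|\wt T\|\sqrt d$; dividing by $\|Q_n\|_2$ and using $\|Q_n\|_2\to\infty$, $\|Q_n\|_2\sim\|P_n\|_2$, and $\|[T,P_n]\|_2=o(\|P_n\|_2)$, the right-hand side tends to $0$. Thus, after discarding the finitely many indices with $Q_n=0$, the family $\{Q_n\}\subset\cPfin(\HH)$ has unbounded rank and satisfies the Hilbert--Schmidt F\o lner estimate for $\wt T$, so Proposition~\ref{pro-1}(iii) delivers a proper F\o lner sequence for $\wt T$. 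The main difficulty is exactly this forward direction---manufacturing honest projections on $\HH$ out of the compressions $\wt E P_n\wt E$---and the finite dimensionality of $\cH_0$ is precisely what makes the defects uniformly finite-rank, hence negligible once normalized by $\|\cdot\|_2$.
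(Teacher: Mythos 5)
Your argument is correct and complete. Note, however, that the paper itself does not prove Proposition~\ref{prop:decomp}: it explicitly defers to Section~3 of \cite{pLledoYak12}, so there is no in-paper proof to compare against. On its own merits, your proof holds up. The easy direction via Proposition~\ref{localization} plus a coordinate flip is exactly right (and unitary invariance of all the relevant data is correctly observed). For the substantive direction, the block identity $D_n-D_n^2=B_n^*B_n$ with $\rank B_n\le d$ is the right lever: it gives at most $d$ eigenvalues of $D_n$ in $(0,1)$, hence $\|D_n-Q_n\|_2\le\sqrt d$ for the spectral cut-off $Q_n=\1_{[1/2,1]}(D_n)$, and the trace comparison $\rank Q_n\ge\Tr(P_n)-2d$ shows the ranks are unbounded. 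The commutator identity $[\wt T,D_n]=\wt E\,[T,P_n]\,\wt E$ uses that $\cH_0$ reduces $T$, which is exactly the hypothesis, and the final appeal to Proposition~\ref{pro-1}(iii) to upgrade the (possibly non-monotone, non-exhausting) family $\{Q_n\}$ to a proper F\o lner sequence is the correct way to finish; you also correctly discard the finitely many indices with $Q_n=0$ so that the family lies in $\cPfin(\HH)$. I see no gap.
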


Note that in the reverse implication of Proposition~\ref{pro:hypertrace} the sequence of projections
does not have to be a proper F\o lner sequence in the sense of Definition~\ref{def:Foelner}.
In fact, one can easily construct the following counterexample: consider
a finite block reducible operator $T= T_0\oplus T_1$
on the Hilbert space $\cH=\cH_0\oplus\cH_1$, with $1\le \dim \cH_0<\infty$ and
where $T_1$ has no F\o lner sequence (examples of these type of operators
will be given in Section~\ref{sbsc:strg-NF}). Then, one can show
that $C^*(T\,,\,\1)$ has a
hypertrace (see Williams theorem in Subsection~\ref{subsec:Williams}) and
by Proposition~\ref{pro:hypertrace} it has
a F\o lner sequence also. The obvious choice of F\o lner sequence is the
constant sequence $P_n=\1_{\cH_0}\oplus 0$, $n\in\N$, which trivially
satisfies (\ref{eq:F1}) for $T$. But $T$ cannot have a proper F\o lner sequence,
because $T_1$ has no F\o lner sequence by Proposition~\ref{prop:decomp}.

The following proposition clarifies the relation between F\o lner sequences and proper F\o lner sequences
in the context of operator theory. In a sense the difference between F\o lner sequence and proper F\o lner
sequence can only appear if the operator is finite block reducible.

\begin{prop}\label{prop:splitting}
Let $T\in\cL(\cH)$ and suppose that $T P-P T\not=0$
for all $ P\in \cPfin(\cH)$. If there is a
F\o lner sequence of projections $\{P_n\}_n\subset \cPfin(\cH)$ of a constant rank,
then $T$ has a proper F\o lner sequence.
\end{prop}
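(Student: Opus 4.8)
The plan is to reduce everything to Proposition~\ref{pro-1}(iii): it suffices to produce a \emph{single} F\o lner sequence for $T$ whose ranks are unbounded, since (iii) then automatically upgrades it to a proper one. Write $k$ for the common rank of the given projections, so that $\|P_n\|_2=\sqrt k$ is constant and the F\o lner condition \eqref{eq:F1} reads simply $\|TP_n-P_nT\|_2\to 0$. The idea is then to glue together many of the $P_n$, chosen with asymptotically orthogonal ranges and with commutators going to zero, into projections $Q_N$ of rank about $Nk$.

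The key step---and the point where the hypothesis that $T$ has no non-zero finite-dimensional reducing subspace (i.e.\ $TP-PT\neq0$ for every $P\in\cPfin(\cH)$) is used---is to show that $P_n\to 0$ in the strong operator topology. First I would pass to a subsequence along which $P_n$ converges in the weak operator topology (the unit ball is WOT-compact and, $\cH$ being separable, WOT is metrizable there) to some $Q$ with $0\le Q\le\1$. Since left and right multiplication by the fixed operator $T$ are WOT-continuous and $\|TP_n-P_nT\|\le\|TP_n-P_nT\|_2\to0$, the limit satisfies $TQ=QT$; lower semicontinuity of the trace on positive operators gives $\Tr Q\le k$, so $Q$ is a positive trace-class, hence compact, operator commuting with $T$. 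If $Q\neq0$, the spectral projection $E$ onto the eigenspace of its top eigenvalue is a non-zero finite-rank projection commuting with $T$, i.e.\ $E\in\cPfin(\cH)$ with $TE-ET=0$, contradicting the hypothesis. Hence every WOT-limit point is $0$, so $P_n\to0$ weakly, and as the $P_n$ are projections this is the same as $P_n\to0$ strongly.

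With strong convergence in hand I would extract indices $n_1<n_2<\cdots$ recursively so that at step $l$ the projection $P_{n_l}$ lies far enough out that simultaneously $\|P_{n_l}P_{n_j}\|_2\le\eps_l$ for all $j<l$ and $\|TP_{n_l}-P_{n_l}T\|_2\le\delta_l$; the first is possible because $\|P_nP_{n_j}\|_2^2=\sum_i\|P_ne_i^{(n_j)}\|^2\to0$ for each fixed $j$ (with $\{e_i^{(n_j)}\}_i$ an orthonormal basis of $\Ran P_{n_j}$), the second because $\|TP_n-P_nT\|_2\to0$. Choosing $\eps_l,\delta_l$ summable with $\sum_l l\,\eps_l^2<\infty$, set $S_N=\sum_{j=1}^N P_{n_j}$ and let $Q_N$ be the orthogonal projection onto $\Ran S_N$. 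Using $Q_NS_N=S_NQ_N=S_N$ and $\Tr(P_{n_j}P_{n_{j'}})=\|P_{n_j}P_{n_{j'}}\|_2^2$, a short computation gives
\[
\|Q_N-S_N\|_2^2\le\sum_{\substack{j,j'\le N\\ j\neq j'}}\|P_{n_j}P_{n_{j'}}\|_2^2\le 2\sum_{l}l\,\eps_l^2<\infty ,
\]
uniformly in $N$. In particular $\|S_N\|\le 1+\|Q_N-S_N\|_2$ is bounded, so $\dim Q_N=\rank S_N\ge\Tr(S_N)/\|S_N\|=Nk/\|S_N\|\to\infty$.

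Finally I would estimate $\|TQ_N-Q_NT\|_2\le\|TS_N-S_NT\|_2+2\|T\|\,\|Q_N-S_N\|_2\le\sum_j\delta_j+2\|T\|\sup_N\|Q_N-S_N\|_2$, which is bounded independently of $N$, while $\|Q_N\|_2=\sqrt{\dim Q_N}\to\infty$. Thus $\|TQ_N-Q_NT\|_2/\|Q_N\|_2\to0$ with $\dim Q_N$ unbounded, and Proposition~\ref{pro-1}(iii) produces a proper F\o lner sequence for $T$. The main obstacle is the strong-convergence step of the second paragraph: the gluing estimates are routine, but excluding a non-trivial weak limit is exactly what forces the non-reducibility hypothesis into play, and without it one could only hope for an ordinary (not proper) F\o lner sequence, as the counterexample preceding Proposition~\ref{prop:splitting} shows.
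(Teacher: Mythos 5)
Your argument is correct; I checked the two main steps in detail. The dichotomy in your second paragraph is sound: a WOT-cluster point $Q$ of the constant-rank sequence is a positive trace-class operator commuting with $T$, its top eigenspace is then a non-trivial finite-dimensional reducing subspace (it is invariant under both $T$ and $T^*$ since $Q=Q^*$ commutes with both), and this is exactly what the hypothesis $TP-PT\neq 0$ for all $P\in\cPfin(\cH)$ rules out, forcing $P_n\to 0$ strongly. The gluing estimate also checks out: with $S_N=\sum_{j\le N}P_{n_j}$ one has $\|Q_N-S_N\|_2^2=\rank S_N-Nk+\sum_{j\neq j'}\Tr(P_{n_j}P_{n_{j'}})$, and $\rank S_N\le Nk$ gives your bound, after which Proposition~\ref{pro-1}~(iii) finishes the proof. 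Note that this survey does not reproduce a proof of Proposition~\ref{prop:splitting} (it defers, together with Proposition~\ref{prop:decomp}, to Section~3 of \cite{pLledoYak12}), so there is no in-paper argument to compare against line by line; your route --- weak-limit dichotomy plus assembling asymptotically orthogonal constant-rank pieces and invoking Proposition~\ref{pro-1}~(iii) --- is the natural mechanism and is in the same spirit as the structural analysis carried out in that reference. One cosmetic remark: what your construction actually needs is $\sum_l\delta_l<\infty$ and $\sum_l l\,\eps_l^2<\infty$ (boundedness, not smallness, of $\|TQ_N-Q_NT\|_2$ suffices, precisely because Proposition~\ref{pro-1}~(iii) only asks for the ratio to tend to zero), and your write-up already uses only that.
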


\section{F\o lner sequences in operator theory}\label{sec:operators}

Using a classical result by Berg that states that any normal
operator can be expressed as a sum of a diagonal operator and a
compact operator (cf.~\cite[Section~II.4]{bDavidson96}) it is
immediate that any normal operator has a proper F\o lner sequence.
In the next subsection we will address the question of existence of
proper F\o lner sequences for an important class of non-normal
operators. We will also explore the structure of operators that have
no proper F\o lner sequence. Finally we will show the strong link
between the class of finte operators (in the sense of Williams
\cite{Williams70}) and the notion of F\o lner sequence.

\subsection{Essentially normal operators}\label{subsec:ess-normal}

In this subsection we give a proof of the fact that any essentially
normal operator has a proper F\o lner sequence. The proof below is
from an earlier version of \cite{pLledoYak12}. In this
reference we present a stronger statement, namely that any
essentially hyponormal operator has a proper F\o lner sequence by using
different techniques (see Theorem~5.1 in
\cite{pLledoYak12})\footnote{An operator $T\in \cL(\cH)$ is
called \textit{hyponormal} if its self-commutator $[T^*,T]$ is
nonnegative. $T$ is called \textit{essentially hyponormal} if the
image in the Calkin algebra $\cL(\cH)/\cK(\cH)$ of $[T^*,T]$ is a
nonnegative element. Any essentially normal operator is essentially
hyponormal (see, e.g., \cite[Chapter~4]{bConway91} or the review
\cite{Volberg90} for additional results).}.

Nevertheless in our opinion the present direct proof is interesting in itself and the reasoning
is completely different from that in \cite{pLledoYak12}.
The proof below is based on the absorbing property for direct sums
given in Proposition~\ref{localization} and pure operator theoretic arguments including
Brown-Douglas-Fillmore theory.

We begin showing that the unilateral shift $S$ has a canonical proper F\o lner sequence.
In fact, define $S$ on $\cH:=\ell^2(\N_0)$ by $Se_i:=e_{i+1}$, where
$\{e_i\mid i=0,1,2,\dots\}$ is the canonical basis of $\cH$
and consider for any $n$ the orthogonal projection $P_n$ onto span$\{e_i\mid i=0,1,2,\dots, n\}$.
Then
\[
 \big\|[P_n,S]\big\|_2^2=\sum_{i=1}^\infty \Big\|[P_n,S]e_i\Big\|^2=\|e_{n+1}\|^2=1
\]
and
\[
\frac{\big\|[P_n,S]\big\|_2}{\| P_n\|_2}=\frac{1}{\sqrt{n+1}}\;\mathop{\longrightarrow}\limits_{n\to\infty} \;0\;.
\]

Next we recall some definitions and facts concerning essentially normal operators.
Details and additional references can be found, e.g., in \cite{BrownIn73,Fillmore72}; see also
\cite{Davidson-ess}  for an excellent brief up-to day account of essential normality and the
Brown--Douglas--Fillmore theory.
An operator $T\in\cL(\cH)$ is called {\em essentially normal}
if its self-commutator is a compact operator, i.e.,~if $[T,T^*]\in\cK(\cH)$. If $\rho$
is the quotient map from $\cL(\cH)$ onto the Calkin algebra $\cL(\cH)/\cK(\cH)$, then $T$ is
essentially normal if and only if $\rho(T)$ is normal in the Calkin algebra.
The unilateral shift $S$ mentioned above is a standard example of
an essentially normal operator,
since its self-commutator is a rank~$1$ projection.
We recall that an operator $F\in\cL(\cH)$ is
called \textit{Fredholm} if its range $\ran F$ is
closed and both $\ker F$ and $(\ran F)^\perp$ are finite dimensional.
The index of a Fredholm operator $F$ is defined as
\[
 \ind(F)=\dimk F-\dim (\ran F)^\perp\;.
\]
\textit{The essential spectrum} of an operator $T$ is
\[
\sigma_{\mathrm{ess}}(T):= \{\lambda\in\C \mid T-\lambda\1\;\;\mathrm{is~not~Fredholm} \}\;.
\]
If $F$ is Fredholm and $K$ is compact, then $F+K$ is Fredholm and
$\ind(F+K)=\ind(F)$. Finally, $F\in\cL(\cH)$ is Fredholm if and only if
$\rho(F)$ is invertible in the Calkin algebra. Therefore, the essential
spectrum of any $T\in\cL(\cH)$ coincides with the spectrum of $\rho(T)$.
We refer to Section~I.8 in \cite{Shubin01} for an accessible exposition of Fredholm operators.

We will need later the following standard facts:
\begin{prop}\label{lem:invert}
Let $\{ T_n\}_{n\in \N}$ be a sequence of bounded operators in
$\cL(\cH_n)$.
 \begin{itemize}
  \item[(i)] Assume $\sup_{n}\left\{\|T_n\|\right\}<\infty$ and define the bounded operator
   $\widehat{T}=\oplus_n T_n$ on $\oplus_n \cH_n$. Then, $\widehat{T}$ is invertible if and only if each $T_n$ is invertible and
\[
\sup_{n}\left\{\|T_n^{-1}\|\right\}<\infty\;.
\]
  \item[(ii)] $\sigma_{\mathrm{ess}}(T_1\oplus T_2)=\sigma_{\mathrm{ess}}(T_1)\cup\sigma_{\mathrm{ess}}(T_2)$.
  \item[(iii)] If $T_1, T_2$ are Fredholm operators, then
               $\mathrm{ind}(T_1\oplus T_2)=\mathrm{ind}(T_1)+\mathrm{ind}(T_2)$.
 \end{itemize}
\end{prop}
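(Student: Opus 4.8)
The plan is to treat the three parts separately, since each reduces to the good behaviour of the relevant notion under the orthogonal block decomposition $\oplus_n\cH_n$. Throughout I write $P_n$ for the orthogonal projection of $\oplus_n\cH_n$ onto the block $\cH_n$; the defining feature of a direct sum $\widehat T=\oplus_n T_n$ is that it commutes with every $P_n$, equivalently that it is block diagonal.

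For (i) the easy direction is the construction of an inverse: if each $T_n$ is invertible and $M:=\sup_n\|T_n^{-1}\|<\infty$, then $\widehat S:=\oplus_n T_n^{-1}$ is a well-defined bounded operator with $\|\widehat S\|=\sup_n\|T_n^{-1}\|=M$, and a blockwise check gives $\widehat S\,\widehat T=\widehat T\,\widehat S=\1$. For the converse, suppose $\widehat T$ is invertible with inverse $R$. Since $\widehat T$ commutes with each $P_n$ and is invertible, $R$ commutes with each $P_n$ as well, so $R$ is itself block diagonal, $R=\oplus_n R_n$ with $R_n:=R|_{\cH_n}$. Reading off the blocks of $R\widehat T=\widehat T R=\1$ shows $R_n T_n=T_n R_n=\1_{\cH_n}$, so each $T_n$ is invertible with $T_n^{-1}=R_n$; and $\|T_n^{-1}\|=\|R_n\|\le\|R\|$ for all $n$ yields the uniform bound. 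The one point requiring care, and the \emph{main obstacle} of the statement, is precisely this uniform bound: invertibility of each $T_n$ alone does not force boundedness of $\oplus_n T_n^{-1}$, so both directions genuinely use the hypothesis $\sup_n\|T_n^{-1}\|<\infty$.

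For (ii) and (iii) the key observation is that for a direct sum of finitely many blocks the kernel and the range split: $\ker(T_1\oplus T_2)=\ker T_1\oplus\ker T_2$ and $\ran(T_1\oplus T_2)=\ran T_1\oplus\ran T_2$, whence $(\ran(T_1\oplus T_2))^\perp=(\ran T_1)^\perp\oplus(\ran T_2)^\perp$, and the latter range is closed exactly when both $\ran T_1$ and $\ran T_2$ are. Consequently $(T_1\oplus T_2)-\lambda\1=(T_1-\lambda\1)\oplus(T_2-\lambda\1)$ is Fredholm if and only if both summands are; taking complements in $\C$ gives (ii). For (iii), when $T_1,T_2$ are Fredholm the same splitting turns $\dimk$ and $\dim(\ran\,\cdot\,)^\perp$ into sums over the two blocks, and subtracting yields $\ind(T_1\oplus T_2)=\ind T_1+\ind T_2$. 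No input is needed beyond the elementary direct-sum decompositions of kernel and range, together with the characterization of Fredholmness recalled above.
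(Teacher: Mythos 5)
Your proposal is correct and complete. The paper itself offers no proof of Proposition~\ref{lem:invert} --- it is introduced merely as a collection of ``standard facts'' --- and your argument is exactly the standard one: for (i), the observation that the inverse of an operator commuting with each block projection $P_n$ again commutes with each $P_n$ (hence is block diagonal) is the right way to get both the blockwise invertibility and the uniform bound $\|T_n^{-1}\|\le\|R\|$; for (ii) and (iii), the splitting of kernel, range, and cokernel across the two orthogonal summands, together with the fact that $\ran T_1\oplus\ran T_2$ is closed precisely when both ranges are, gives the Fredholm characterization and the additivity of the index.
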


The proof of the main result of this subsection
is based on the existence of operators having specific spectral
properties. In what follows, for a
subset $\Om$ of the complex plane, we denote by $\clos \Om$ its closure and put
$$
\barr \Om=\{\bar z \mid  z\in \Om\}\;.
$$
We will use
the space $R^2(\Omega)$,
defined as the closure in $L^2(\Omega)$ (with the Lebesgue measure) of the set of rational functions
with poles off $\clos\Om$ (see, e.g., Chapter~1 in \cite{bConway91}).

We need to recall here also some other standard notions in operator theory.
An operator $T\in\cL(\cH)$ is called {\em finitely multicyclic}
if there are finitely many vectors $g_1,\dots,g_m\in\cH$ such that the span of the set
\[
\{ u(T)g_i\mid 1\leq i\leq m\;,\;\;   u\;\;\mathrm{rational~function~with~poles~off~}\sigma(T)\}
\]
is dense in $\cH$. The vectors $g_1,\dots,g_m$ are called a cyclic set of vectors. If $T$ is finitely
multicyclic and $m$ is the smallest number of cyclic vectors, then $T$ is called $m$-multicyclic.

For the reader's convenience, we recall the following classical result due to
Berger and Shaw and which we will use several times. For details we refer to the original
article \cite{Berger-Shaw-73} or to Section~IV.2 in \cite{bConway91}.

\begin{thm}[Berger-Shaw]
Suppose $T$ is an $m$-multicyclic hyponormal operator. Then its self-commutator
$[T^*, T]$ is of trace class and the canonical trace satisfies
\begin{equation*}
 \Tr \big([T^*, T]\big)
        \leq \frac{m}{\pi}\,\mathrm{area}(\sigma(T))\; ,
\end{equation*}
where $\sigma(T)$ denotes the spectrum of $T$.
\end{thm}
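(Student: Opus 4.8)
\emph{Sketch of proof.} Write $D := [T^*,T] = T^*T - TT^* \ge 0$, the positivity being exactly hyponormality. The assertion is twofold: that $D$ is trace class and that $\Tr(D) \le \frac{m}{\pi}\,\mathrm{area}(\sigma(T))$. The plan is to obtain both at once from a single uniform estimate on finite-rank compressions of $D$, using the $m$-multicyclic structure to produce the multiplicity factor. As preliminary normalisations, recall that a hyponormal operator is normaloid, so $\|T\|$ equals its spectral radius; translating and rescaling, we may place $\sigma(T)$ in a disc of controlled area. A normal direct summand contributes $0$ to $D$ while only enlarging $\sigma(T)$ (the spectrum of a direct sum being the union of the spectra, cf.\ Proposition~\ref{lem:invert}), so we may also assume $T$ is pure.

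First I would build a filtration adapted to multicyclicity. Let $g_1,\dots,g_m$ be a cyclic set and put $\cM_N := \bigvee\{\,T^j g_i : 0 \le j < N,\ 1\le i\le m\,\}$, an increasing chain of finite-dimensional subspaces with $\dim \cM_N \le mN$ and dense union; let $E_N$ be the orthogonal projection onto $\cM_N$, so that $E_N \uparrow \1$ strongly. The decisive geometric feature is $T\cM_N \subseteq \cM_{N+1}$, i.e.\ the off-diagonal block $(\1-E_N)TE_N$ has rank at most $m$: the compression leaks into at most $m$ new dimensions per step. Since $D \ge 0$ and $E_N\uparrow\1$, the quantities $\Tr(E_N D E_N) = \|TE_N\|_2^2 - \|T^*E_N\|_2^2$ (a Hilbert--Schmidt identity) increase monotonically to $\Tr(D)$. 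Consequently, \emph{any} uniform bound $\Tr(E_N D E_N)\le C$ valid for all $N$ forces $D$ to be trace class with $\Tr(D)\le C$; this is how trace-class-ness and the estimate are delivered simultaneously.

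The crux is therefore the uniform area inequality
\[
\|TE_N\|_2^2 - \|T^*E_N\|_2^2 \ \le\ \frac{m}{\pi}\,\mathrm{area}(\sigma(T)) + o(1)\qquad(N\to\infty).
\]
This is the genuine content of the Berger--Shaw argument: it upgrades Putnam's inequality $\|D\|\le \frac1\pi\,\mathrm{area}(\sigma(T))$ for a single vector to a trace bound, the dimension count $\dim\cM_N\le mN$ being what converts the single-vector estimate into the factor $m$. I would establish it by a Putnam/Cauchy-transform computation controlling the Hilbert--Schmidt defect of the compressed operator by the two-dimensional Lebesgue measure of the spectrum, keeping track of the rank-$m$ overflow block $(\1-E_N)TE_N$ that produces the vanishing error. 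Conceptually the bound expresses that Pincus's principal function $g_T$, which satisfies $\Tr(D)=\frac1\pi\int_\C g_T\,dA$ and is supported on $\sigma(T)$, obeys $0\le g_T\le m$; indeed $g_T(z) = -\ind(T-z) = \dim\ker(T^*-\bar z) - \dim\ker(T-z)$, and $m$-multicyclicity bounds $\dim\ker(T^*-\bar z)\le m$ while purity kills $\ker(T-z)$. One cannot, however, simply invoke this formula, since the trace formula presupposes $D$ trace class -- precisely what is to be proved -- so the elementary estimate above is unavoidable.

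The main obstacle is exactly this last inequality, and within it the control of the error term. Passing from Putnam's norm bound to the sharp trace bound with constant $m$ requires careful bookkeeping of how the finite-rank compressions approximate $D$ and of the contribution of the rank-$m$ leakage at each stage; verifying that this contribution is $o(1)$ relative to the area term, uniformly in $N$, is where the real work lies. Everything else -- positivity of $D$, the filtration, the monotone passage to the limit, and the reductions -- is routine.
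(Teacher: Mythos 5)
First, note that the paper does not actually prove this statement: Berger--Shaw is recorded as a classical result, and the text refers to the original article \cite{Berger-Shaw-73} and to Section~IV.2 of \cite{bConway91} for the proof, so there is no internal argument to compare yours against. Judged on its own, your sketch reproduces the standard architecture of the Berger--Shaw argument --- the filtration generated by the cyclic set, the rank-$m$ off-diagonal leakage $(\1-E_N)TE_N$, and the monotone convergence $\Tr(E_N D E_N)\uparrow\Tr(D)$ for $D\ge 0$ --- and those ingredients are sound. But there are two genuine gaps.

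The central inequality is asserted rather than proved, and the elementary estimate you set up does not deliver it. From $\Tr(E_N D E_N)=\|TE_N\|_2^2-\|T^*E_N\|_2^2$, together with $\|T^*E_N\|_2=\|E_N T\|_2\ge\|E_N T E_N\|_2$ and $\|TE_N\|_2^2=\|E_N T E_N\|_2^2+\|(\1-E_N)TE_N\|_2^2$, one gets $\Tr(E_N D E_N)\le\|(\1-E_N)TE_N\|_2^2\le m\|T\|^2$; after translation this bounds $\Tr(D)$ by $\frac{m}{\pi}$ times the area of a \emph{disc containing} $\sigma(T)$, not of $\sigma(T)$ itself. The passage from this crude bound to $\frac{m}{\pi}\,\mathrm{area}(\sigma(T))$ is the real content of the theorem (classically obtained by applying the crude bound to $r(T)$ for rational $r$ with poles off $\sigma(T)$ and a covering or measure-theoretic limit), and your sketch only gestures at it via ``a Putnam/Cauchy-transform computation.'' Second, the paper's definition of $m$-multicyclic uses \emph{rational} functions with poles off $\sigma(T)$, whereas your filtration $\cM_N=\mathrm{span}\{T^j g_i: 0\le j<N,\ 1\le i\le m\}$ uses only polynomials; hence $\bigcup_N\cM_N$ need not be dense, $E_N$ need not converge strongly to $\1$, and the claim $\Tr(E_N D E_N)\uparrow\Tr(D)$ fails as stated. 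The reduction from rational to polynomial cyclicity is a separate step that the classical proofs carry out explicitly and that your sketch omits. (A minor further point: the reduction to pure $T$ requires checking that the pure summand inherits $m$-multicyclicity; this is true, since $\sigma$ of a direct summand is contained in $\sigma(T)$, but it should be said.)
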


\begin{lem}\label{lem:construction}
Let $\Omega$ be an open, bounded and connected subset of $\C$. Then, the multiplication operator on
$R^2(\Omega)$ given by
\[
 \left(M_\Omega f\right)(z):=z\,f(z)\;,\quad f\in R^2(\Omega)\;,
\]
satisfies the following properties:
\begin{itemize}
  \item[(i)] $\sigma\left(M_\Omega\right) = \clos\Om$ and $\|M_\Omega\|=\max_{z\in\clos\Om}\;\{|z|\}$.
  \item[(ii)] $\sigma_{\mathrm{ess}}\left(M_\Omega\right) \subset \partial\Omega$ and
\[
 \mathrm{ind}\,(M_\Omega-\lambda\1) =
 \begin{cases}
  0\, , \quad  &\lambda\not\in\clos\Om    \\
  -1\, , \quad &\lambda\in\Omega\;.
 \end{cases}
\]
 \item[(iii)] $\|(M_\Omega-\lambda)^{-1}\| = \left(\mathrm{dist}\left(\lambda,\clos\Om \right)\right)^{-1}
              \;,\;\lambda\not\in\clos\Om\;.$
 \item[(iv)] $M_\Omega$ is a hyponormal operator.\footnote{In fact a stronger property holds: $M_\Omega$ is a subnormal
 operator (i.e., the restriction of a normal operator to an invariant subspace); see \cite{bConway91}.}
 \item[(v)] The self-commutator $[M_\Omega^*,M_\Omega]$ is a trace-class operator and
\begin{equation}\label{eq:Berger-Shaw}
 \Tr \big([M_\Omega^*,M_\Omega]\big)
        \leq \frac{1}{\pi}\,\mathrm{area}(\Omega)\; .
\end{equation}
\end{itemize}
\end{lem}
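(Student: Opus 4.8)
The plan is to establish the five properties in the order (iv), (i), (iii), (v), (ii), since the Fredholm statement (ii) is the only substantial point and the others feed into it. First I would observe that $R^2(\Omega)$ is invariant under multiplication by $z$ on $L^2(\Omega)$ (a rational function with poles off $\clos\Omega$ remains one after multiplying by $z$, and $z$ is bounded on the bounded set $\Omega$), so $M_\Omega$ is the restriction of a normal operator to an invariant subspace, i.e.\ subnormal, hence hyponormal; this gives (iv). For (i), if $\lambda\notin\clos\Omega$ then $1/(z-\lambda)$ is rational with its pole off $\clos\Omega$ and bounded on $\Omega$, so multiplication by it preserves $R^2(\Omega)$ and inverts $M_\Omega-\lambda$; while if $\lambda\in\Omega$ the constant $1\in R^2(\Omega)$ has no preimage, since $1=(z-\lambda)g$ would force $g=1/(z-\lambda)$, which is not in $L^2(\Omega)$ because $|z-\lambda|^{-2}$ is not locally integrable at $\lambda$. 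Thus $\sigma(M_\Omega)=\clos\Omega$. The norm identity then follows by a squeeze: from $\sigma(M_\Omega)=\clos\Omega$ the spectral radius of $M_\Omega$ equals $\max_{\clos\Omega}|z|$, this is at most $\|M_\Omega\|$, and $\|M_\Omega\|$ is in turn at most the norm $\sup_\Omega|z|=\max_{\clos\Omega}|z|$ of multiplication by $z$ on $L^2(\Omega)$, so all three are equal.

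For (iii) I would use that for $\lambda\notin\clos\Omega$ the resolvent is multiplication by $1/(z-\lambda)$, of norm at most $\mathrm{ess\,sup}_\Omega|z-\lambda|^{-1}=\mathrm{dist}(\lambda,\clos\Omega)^{-1}$, and combine this with the universal lower bound $\|(M_\Omega-\lambda)^{-1}\|\ge\mathrm{dist}(\lambda,\sigma(M_\Omega))^{-1}$ together with (i). For (v), the key remark is that $M_\Omega$ is cyclic with cyclic vector $1$: for $u$ rational with poles off $\sigma(M_\Omega)=\clos\Omega$ one has $u(M_\Omega)1=u$, and these functions are dense in $R^2(\Omega)$ by definition. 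Hence $M_\Omega$ is $1$-multicyclic and hyponormal, and the Berger--Shaw theorem gives at once that $[M_\Omega^*,M_\Omega]$ is trace class with $\Tr[M_\Omega^*,M_\Omega]\le\frac1\pi\,\mathrm{area}(\clos\Omega)$, which is the asserted bound (here $\mathrm{area}(\clos\Omega)=\mathrm{area}(\Omega)$ as soon as $\partial\Omega$ is Lebesgue-null, which holds in the intended applications).

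The substance is (ii). Since $\sigma_{\mathrm{ess}}(M_\Omega)\subseteq\sigma(M_\Omega)=\clos\Omega$ and $M_\Omega-\lambda$ is invertible (index $0$) for $\lambda\notin\clos\Omega$, it remains to prove that for each $\lambda\in\Omega$ the operator $M_\Omega-\lambda$ is Fredholm of index $-1$; this forces $\sigma_{\mathrm{ess}}(M_\Omega)\subseteq\partial\Omega$ and gives the index formula. I would first note that $M_\Omega-\lambda$ is injective for every $\lambda$, since $(z-\lambda)f=0$ a.e.\ implies $f=0$. Next, a mean-value estimate shows that evaluation at $\lambda$ is bounded on the dense subspace, $|f(\lambda)|\le(\sqrt\pi\,r)^{-1}\|f\|$ with $r=\mathrm{dist}(\lambda,\partial\Omega)$, so it extends to a bounded functional represented by a nonzero reproducing kernel $k_\lambda\in R^2(\Omega)$ satisfying $M_\Omega^*k_\lambda=\bar\lambda k_\lambda$. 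Consequently $\Ran(M_\Omega-\lambda)$ is orthogonal to $k_\lambda$, i.e.\ contained in the codimension-one space $Z_\lambda:=\{k_\lambda\}^\perp$.

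The main obstacle is to upgrade this to closed range with cokernel exactly one-dimensional, and I expect the bounded-below estimate to be the crux. For $f$ in the dense subspace I would split $\|f\|^2$ into the part over $\{|z-\lambda|\ge r/2\}$, where $1\le(2/r)^2|z-\lambda|^2$ controls it by $\|(z-\lambda)f\|^2$, and the part over the disc $\{|z-\lambda|<r/2\}$, where $f$ is analytic and the monotonicity of the circular averages of the subharmonic function $|f|^2$ bounds the inner integral by the annular one (already controlled). This yields $\|f\|\le C(r)\,\|(z-\lambda)f\|$, hence $M_\Omega-\lambda$ is bounded below and has closed range. To identify this range with $Z_\lambda$, I would check that $\Ran(M_\Omega-\lambda)$ is dense in $Z_\lambda$: given $g\in Z_\lambda$, approximate it in $L^2$ by elements $d_n$ of the dense subspace; then $d_n(\lambda)=\langle d_n,k_\lambda\rangle\to\langle g,k_\lambda\rangle=0$, while $d_n-d_n(\lambda)\cdot 1$ vanishes at $\lambda$, hence lies in $\Ran(M_\Omega-\lambda)$ and still converges to $g$. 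Since the range is closed and contained in $Z_\lambda$, it equals $Z_\lambda$; therefore $\dim\ker(M_\Omega^*-\bar\lambda)=1$ and $\ind(M_\Omega-\lambda)=0-1=-1$, which yields $\sigma_{\mathrm{ess}}(M_\Omega)\subseteq\partial\Omega$ and the stated index formula.
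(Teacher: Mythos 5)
Your proof is correct and arrives at the same facts, but it is considerably more self-contained than the paper's. The paper's proof is essentially a list of citations: it records that $R^2(\Omega)$ consists of analytic functions with bounded point evaluations (citing Conway), declares (i) and (iii) to be standard properties of multiplication operators, reduces (ii) to three unproved observations --- $\ker(M_\Omega-\lambda\1)=\{0\}$ for all $\lambda$, $\Ran(M_\Omega-\lambda\1)=R^2(\Omega)$ for $\lambda\notin\clos\Omega$, and $\Ran(M_\Omega-\lambda\1)=\{f\in R^2(\Omega)\mid f(\lambda)=0\}$ for $\lambda\in\Omega$ --- and proves (iv) via the identity $M_\Omega^*f=Q_R(\bar z f)$ rather than your subnormality remark (the two are equivalent one-liners). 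Your treatment of (ii) supplies exactly the content the paper defers: the reproducing kernel $k_\lambda$ describing the cokernel, the bounded-below estimate obtained from subharmonicity of $|f|^2$ (which yields closed range), and the density argument identifying the range with $\{k_\lambda\}^\perp$; all three steps check out, and this is the part of the lemma a reader would most want to see written down. One caveat you share with the paper: Berger--Shaw, as stated, bounds the trace by $\frac{1}{\pi}\,\mathrm{area}(\sigma(M_\Omega))=\frac{1}{\pi}\,\mathrm{area}(\clos\Omega)$, so the inequality with $\mathrm{area}(\Omega)$ as written requires $\partial\Omega$ to be Lebesgue-null (or a sharper form of Berger--Shaw); you flag this explicitly, while the paper passes over it in silence.
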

\begin{proof}
It is a standard fact that $R^2(\Om)$ consists of analytic functions on $\Omega$
and that for any $\la\in\Om$, the evaluation functional
$f\mapsto f(\la)$ is bounded on $R^2(\Om)$ (see, e.g., Section~II.7 in \cite{bConway91}).
Parts (i) and (iii) follow from standard properties of the multiplication operator. (Note that for
$\lambda\not\in\clos\Om$ the function $(z-\lambda)^{-1}$ is bounded and analytic in $\Omega$.)
To prove (ii), it suffices to observe that $\ker (M_\Omega-\lambda)=\{0\}$ for any $\la\in \C$,
that $\Ran (M_\Omega-\lambda)=R^2(\Om)$ for $\la\notin \clos\Om$ and that
\[
\Ran (M_\Omega-\lambda)=\big\{f\in R^2(\Om)\mid  f(\la)=0\big\} \quad \text{for  $\la\in\Om$}\,.
\]
This gives the formula for the index stated above.

To prove (iv) note that $M_\Omega^*f=Q_R (\overline{z}\,f)$, $f\in R^2(\Omega)$, where
$Q_R$ denotes the orthogonal projection from $L^2(\Omega)$ onto $R^2(\Omega)$. Therefore
$\|M_\Omega^* f\|\leq \|M_\Omega f\|$, $f\in R^2(\Omega)$, which implies that
$M_\Omega$ is a hyponormal. Finally,
by the definition of $R^2(\Omega)$, the constant function $1$ is cyclic for $M_\Omega$, so that
$M_\Omega$ is $1$-multicyclic. Hence we can
apply Berger-Shaw Theorem to conclude
that $[M_\Omega^*,M_\Omega]$ is a trace-class operator
and that the inequality stated above holds.
\end{proof}

\begin{defn}\label{def:sp-picture}
Let $T,R\in\cL(\cH)$ be essentially normal operators. We say that $T$ and $R$
have the same spectral picture the following two conditions hold:
\begin{itemize}
 \item[(i)] $\sigma_{\mathrm{ess}}(T)=\sigma_{\mathrm{ess}}(R)=:X$
 \item[(ii)] $\mathrm{ind}(T-\lambda\1)=\mathrm{ind}(R-\lambda\1)\;$, $\lambda\not\in X$.
\end{itemize}
\end{defn}

\begin{thm}\label{main1}
Any essentially normal operator $T\in\cL(\cH)$ has a proper F\o lner sequence.
\end{thm}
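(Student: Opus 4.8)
The plan is to reduce the theorem, via the Brown--Douglas--Fillmore (BDF) theorem, to the construction of a single \emph{model} operator $R$ that has the same spectral picture as $T$ in the sense of Definition~\ref{def:sp-picture} and that manifestly carries a proper F\o lner sequence. Recall that BDF classifies essentially normal operators up to unitary equivalence modulo the compacts precisely by their spectral picture; hence, once such an $R$ is built, there is a unitary $U$ and a compact $K$ with $UTU^*=R+K$. Since $R$ has a proper F\o lner sequence, so does $UTU^*$ by the compact-perturbation invariance in Proposition~\ref{pro-1}(i); conjugating that sequence by $U^*$ preserves all defining properties of a proper F\o lner sequence (orthogonal projections, monotonicity, and strong convergence to $\1$ are stable, and the ratios in \eqref{eq:F1} are unchanged by the unitary invariance of $\|\cdot\|_2$), and thus yields a proper F\o lner sequence for $T$. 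So the whole argument rests on building $R$.

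To construct $R$, write $X:=\sigma_{\mathrm{ess}}(T)$, let $\{\Omega_j\}_j$ be the bounded connected components of $\C\setminus X$, and set $n_j:=\mathrm{ind}(T-\lambda\1)$ for $\lambda\in\Omega_j$ (constant on each $\Omega_j$, and $0$ on the unbounded component). I would then take
\[
  R \;=\; N \;\oplus\; \bigoplus_j R_j, \qquad
  R_j=\begin{cases} M_{\Omega_j}^{\oplus|n_j|}, & n_j<0,\\[2pt] \big(M_{\Omega_j}^*\big)^{\oplus n_j}, & n_j>0,\end{cases}
\]
omitting the $j$-th summand when $n_j=0$, where $N$ is a diagonal normal operator whose diagonal is dense in $X$ with each value repeated infinitely often, so that $\sigma_{\mathrm{ess}}(N)=\sigma(N)=X$. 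By Lemma~\ref{lem:construction}(i) the norms $\|R_j\|=\max_{z\in\clos{\Omega_j}}|z|$ are uniformly bounded (all $\Omega_j$ lie in a fixed bounded region), so $R$ is a bounded operator on a separable, infinite-dimensional space.

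The two key verifications are that $R$ is essentially normal and shares the spectral picture of $T$. For essential normality, the self-commutator of $R$ is $0\oplus\bigoplus_j[R_j^*,R_j]$; each $[R_j^*,R_j]$ is trace class by Lemma~\ref{lem:construction}(v), and since a direct sum of identical copies has the same operator norm, $\|[R_j^*,R_j]\|=\|[M_{\Omega_j}^*,M_{\Omega_j}]\|\le \Tr[M_{\Omega_j}^*,M_{\Omega_j}]\le \tfrac1\pi\,\mathrm{area}(\Omega_j)$ by hyponormality (Lemma~\ref{lem:construction}(iv)) and \eqref{eq:Berger-Shaw}. As the $\Omega_j$ are pairwise disjoint and contained in a bounded region, $\sum_j\mathrm{area}(\Omega_j)<\infty$, so $\|[R_j^*,R_j]\|\to0$ and the direct sum of these compact operators is compact; hence $[R^*,R]\in\cK(\cH)$. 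For the spectral picture, fix $\lambda\in\Omega_{j_0}$: then $N-\lambda$ is invertible, $R_{j_0}-\lambda$ is Fredholm of index $n_{j_0}$ by Lemma~\ref{lem:construction}(ii), and every other $R_j-\lambda$ is invertible with $\mathrm{dist}(\lambda,\clos{\Omega_j})$ bounded below uniformly in $j\ne j_0$ (a ball around $\lambda$ lies in $\Omega_{j_0}$, and $\partial\Omega_j\subset X$); by Lemma~\ref{lem:construction}(iii) and Proposition~\ref{lem:invert}(i) the tail $\bigoplus_{j\ne j_0}(R_j-\lambda)$ is therefore invertible. Proposition~\ref{lem:invert}(iii) then gives $\mathrm{ind}(R-\lambda\1)=n_{j_0}$ and $\lambda\notin\sigma_{\mathrm{ess}}(R)$; the same uniform-distance estimate on the unbounded component shows $\sigma_{\mathrm{ess}}(R)\subseteq X$, while $X=\sigma_{\mathrm{ess}}(N)\subseteq\sigma_{\mathrm{ess}}(R)$, so $\sigma_{\mathrm{ess}}(R)=X$ and the indices match those of $T$.

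Finally, $R$ carries a proper F\o lner sequence almost for free: the diagonal operator $N$ commutes with the increasing coordinate projections $\{P_n^{0}\}$ (onto the first $n$ basis vectors), which converge strongly to $\1$, so $\{P_n^{0}\}$ is a proper F\o lner sequence for $N$ (indeed $[N,P_n^{0}]=0$). Since $N$ acts on an infinite-dimensional space, the absorbing property of Proposition~\ref{localization} produces a proper F\o lner sequence for $R=N\oplus\big(\bigoplus_j R_j\big)$ irrespective of the bounded second summand. Together with the reduction of the first paragraph this proves the theorem. The main obstacle I anticipate is controlling the \emph{infinite} direct sum: ensuring that no spurious essential spectrum appears inside the holes $\Omega_j$ and that $R$ remains essentially normal. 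Both are handled quantitatively --- the former through the uniform lower bound on $\mathrm{dist}(\lambda,\clos{\Omega_j})$ feeding Proposition~\ref{lem:invert}(i), and the latter through the Berger--Shaw area bound forcing $\|[R_j^*,R_j]\|\to0$.
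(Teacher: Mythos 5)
Your overall strategy is exactly the paper's: reduce via Brown--Douglas--Fillmore to building a model $R=N\oplus\bigoplus_j R_j$ with the same spectral picture, where $N$ is a diagonal normal operator with $\sigma_{\mathrm{ess}}(N)=X$ that carries an (even commuting) proper F\o lner sequence and absorbs the rest by Proposition~\ref{localization}; the uniform resolvent bound from Lemma~\ref{lem:construction}(iii) fed into Proposition~\ref{lem:invert}(i) controls the infinite direct sum, and the Berger--Shaw area bound forces $\|[R_j^*,R_j]\|\to 0$ and hence essential normality. All of this is sound and matches the paper's proof step for step.

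The one genuine error is in the summands attached to components of positive index. You set $R_j=(M_{\Omega_j}^*)^{\oplus n_j}$ for $n_j>0$ and invoke Lemma~\ref{lem:construction}(ii) to get index $n_j$ at $\lambda\in\Omega_j$. But $M_{\Omega_j}^*-\lambda\1=(M_{\Omega_j}-\bar\lambda\1)^*$, so passing to the adjoint conjugates the spectral parameter: $\sigma_{\mathrm{ess}}(M_{\Omega_j}^*)\subset\partial\overline{\Omega_j}$ and $\mathrm{ind}(M_{\Omega_j}^*-\lambda\1)=+1$ precisely for $\lambda\in\overline{\Omega_j}$, not for $\lambda\in\Omega_j$. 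Unless $\Omega_j$ is symmetric under complex conjugation, your $R_j$ places essential spectrum and nonzero index in the reflected region $\overline{\Omega_j}$ (which may meet $X$ or other components) and contributes index $0$ on $\Omega_j$ itself; both the inclusion $\sigma_{\mathrm{ess}}(R)\subseteq X$ and the index match with $T$ then fail. The same slip infects your tail estimate: invertibility of $R_j-\lambda\1$ for $j\ne j_0$ is governed by $\mathrm{dist}(\bar\lambda,\clos{\Omega_j})$, which your ball argument does not bound below. The repair is exactly the paper's device: for $n_j>0$ use the adjoint of multiplication by $z$ on $R^2(\overline{\Omega_j})$, i.e.\ $M_{\overline{\Omega_j}}^*$, so that $\mathrm{ind}(M_{\overline{\Omega_j}}^*-\lambda\1)=-\,\mathrm{ind}(M_{\overline{\Omega_j}}-\bar\lambda\1)=+1$ for $\lambda\in\Omega_j$, and $\mathrm{dist}(\bar\lambda,\clos{\overline{\Omega_j}})=\mathrm{dist}(\lambda,\clos{\Omega_j})$, so your uniform lower bound applies verbatim. (For what it is worth, your negative-index summand $M_{\Omega_j}^{\oplus|n_j|}$, of index $-|n_j|=n_j$, is the correct choice; the displayed formula for $S_j$ in the paper appears to have its two cases interchanged relative to its own Lemma~\ref{lem:construction}(ii), so keep your $n_j<0$ case and only replace the $n_j>0$ one.)
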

\begin{proof}
(i) The first step of the proof uses the following
classical result of the Brown-Douglas-Fillmore
theory (see \cite[Section~V]{Brown73} or \cite[Theorem~11.1]{BrownIn73}).
Let $T,R\in\cL(\cH)$ be essentially normal, then we have that $T=U(R+K)U^*$
for some compact operator $K$ and some unitary $U$ if and only if operators $T$ and $R$
have the same spectral picture (cf.~Definition~\ref{def:sp-picture}).
Therefore to prove that $T$ has a proper F\o lner sequence it will be
enough to construct an essentially normal operator
$R$ with a proper F\o lner sequence and having the same spectral picture as $T$. Indeed, if
$\{P_n\}_n$ is a proper F\o lner sequence for $R$, then by Proposition~\ref{pro-1}~(i)
it is also a proper F\o lner sequence for $R+K$ for any compact operator $K$
and therefore $\widehat{P}_n:=UP_nU^*$ is a proper F\o lner sequence for
$T=U(R+K)U^*$.

(ii) Given the essentially normal operator $T$,
the construction of an essentially normal operator
$R$ with the same spectral picture as $T$ and having a proper F\o lner sequence goes as follows.
The set $X:=\sigma_{\mathrm{ess}}(T)$ is a closed
and bounded subset of $\C$, so that we consider its
decomposition
\[
 \C\setminus X :=\mathop{\cup}\limits_{j\in J} \Omega_j
\]
into open, connected and disjoint sets; here $J\subset\N$ is a set of indices. The index function
$\cup_j \Omega_j \ni\lambda\mapsto\mathrm{ind}\,(T-\lambda\1)$ is continuous and therefore
constant on each connected component $\Omega_j$.

We denote for $\lambda\in\Omega_j$
the index by $n_j:=\mathrm{ind}\,(T-\lambda\1)\in\Z$, and put
\[
J_- := \{j\in J\mid n_j < 0\}, \enspace
J_+ := \{j\in J\mid n_j>0\}, \enspace
\Jcup = J_- \cup J_+.
\]
These sets of indices may be  finite or infinite.

To construct $R$, first take any normal operator $N$ on an infinite dimensional
Hilbert space $\cK$ such that $\sigma_{\mathrm{ess}}(N)=X$.
(A concrete example can be constructed as follows: put $\cH=\ell^2(\N)$ and let
$\{d_n\}_{n\in\N}$ be a dense sequence of points in $X$. Any isolated point in
$X$ is repeated infinitely many times. Then the diagonal operator
$N:=\mathrm{diag}\,(\{d_n\}_n)$ is normal and $\sigma_{\mathrm{ess}}(N)=X$.)
Since $N$ is normal we have $\mathrm{ind}\,(N-\lambda\1)=0$, $\lambda\not\in X$.

Second, for any bounded $\Omega_j$, $j\in J_+$ (i.e., $n_j > 0$)
we consider the operator $M_j:=M_{\Omega_j}$ on $R^2(\Omega_j)$
as in Lemma~\ref{lem:construction} and that satisfies the properties (i)-(iv).
If $j\in J_-$, then we put $M_j:=M_{\barr \Om_j}$ on $R^2(\barr \Om_j)$.
Define the Hilbert spaces $\cK_j:=\oplus^{n_j} R^2(\Omega_j)$ for $n_j>0$ and
$\cK_j:=\oplus^{|n_j|} R^2(\barr \Om_j)$ for $n_j<0$.
Next, we construct on $\cK_j$ the operator
\[
S_j:=
 \begin{cases}
 \mathop{\oplus}\limits_1^{|n_j|} M_j^*   \;,\quad & \mathrm{if}\quad n_j < 0\;, \\
\mathop{\oplus}\limits_1^{\vphantom{l} n_j} M_j   \;,\quad      & \mathrm{if}\quad n_j > 0 \;.
 \end{cases}
\]
From Proposition~\ref{lem:invert}~(iii) and
Lemma~\ref{lem:construction}~(ii) we have
$\mathrm{ind}\,(S_j-\lambda\1)=n_j$ for any $\lambda\in\Omega_j$.
Then we consider the operator
\[
 \widehat{S}:=\Big(\mathop{\oplus}\limits_{j\in \Jcup } S_j\Big)
 \quad\mathrm{on}\quad
 \widehat{\cK}:=\mathop{\oplus}\limits_{j\in \Jcup} \cK_j
\]
and, finally, we put
\[
 R:=N\oplus \widehat{S}\in\cL(\cK\oplus \widehat{\cK})\;.
\]

(iii) The last part of the proof consists in showing
that $R$ satisfies all the required properties.

Since $N$ is normal it has a proper F\o lner sequence.
By the absorbing property of proper F\o lner sequences for
direct sums stated in Proposition~\ref{localization}
we conclude that $R$ has a proper F\o lner sequence too.

Next we show that $R$ has the same spectral picture as the given operator $T$.
For this purpose we prove first
that $\sigma_{\mathrm{ess}}(\widehat{S})\subset X$ and that for $\la $ in $\Om_j$,
$\mathrm{ind}\,(\widehat{S}-\lambda\1)=n_j$.
Assume that $\lambda\notin X$. Then
$\lambda\in \Omega_k$ for some index $k\in J$. If $k \notin \Jcup$,
put $d:=\inf_{j\in \Jcup}\left\{\mathrm{dist}\left(\lambda, \clos \Om_j\right)\right\}$.
In this case $d>0$. From Lemma~\ref{lem:construction}~(iii) we obtain
\[
\left\|(S_j-\lambda\1)^{-1}  \right\|= \frac{1}{\mathrm{dist}\big(\lambda,\clos\Om_j\big)}
                                     \leq \frac{1}{d}\, ,  \qquad j\in \Jcup\, .
\]
We conclude that the operator $\widehat{S}-\lambda\1$ is invertible
(recall Proposition~\ref{lem:invert}~(i)), hence it is Fredholm of index $0$ and
$\lambda\not\in\sigma_{\mathrm{ess}}(\widehat{S})$.

Now consider the case when $\lambda\in\Omega_k$, where
$k \in \Jcup$. Then we may consider the decomposition
\[
 \widehat{S}-\lambda\1=(S_k-\lambda\1)\oplus\big(\mathop{\oplus}\limits_{j\ne k} S_j-\lambda\1\big)\;.
\]
The same argument as before shows that $\mathop{\oplus}\limits_{j\ne k} (S_j-\lambda\1)$ is invertible, hence
Fredholm of index $0$. By construction of $S_k$ (see Lemma~\ref{lem:construction}~(ii)) and
by Proposition~\ref{lem:invert}~(iii) we conclude that $\lambda\not\in\sigma_{\mathrm{ess}}(\widehat{S})$ and that
$\mathrm{ind}\,(\widehat{S}-\lambda\1)=n_k$, for any $\lambda\in\Omega_k$.
Therefore we have that $\sigma_{\mathrm{ess}}(\widehat{S})\subset X$.

From the properties of the normal operator $N$ constructed in step
(ii), we have $\sigma_{\mathrm{ess}}(N)=X$. Using now
Proposition~\ref{lem:invert}~(ii) we conclude that
\[
\sigma_{\mathrm{ess}}(R) = \sigma_{\mathrm{ess}}(N) \cup \sigma_{\mathrm{ess}}(\widehat{S})
                          =X\;.
\]
Moreover, we have for any $\lambda\in\Omega_j$
\[
\mathrm{ind}\,(R-\lambda\1)=0+n_j= \mathrm{ind}\,(T-\lambda\1)\;,
\]
and we have shown that $T$ and $R$ have the same spectral picture.

Finally, we still have to show that $R$ is essentially normal, i.e.,
that the self-commutator of $R$ is compact. For this note that
\[
 [R^*,R]=0 \oplus [\widehat{S}^*,\widehat{S}]\;.
\]
We need to consider two cases: if the index set $J_\cup$ is finite, then
by Lemma~\ref{lem:construction}~(v) the operator $\widehat{\cS}$ is
trace class, hence $R$ is essentially normal.
Note that $\mathop{\cup}\limits_{j\in \Jcup} \Omega_j$
is bounded.
Therefore, if the set $\Jcup$ has infinite cardinality, then
we have, in addition,
\begin{equation}\label{area}
\lim_{\Jcup\ni j\to\infty}\mathrm{area}(\Omega_j) = 0 \;.
\end{equation}
Consider the partial direct sum
$
 \widehat{S}_N:= \mathop{\oplus}\limits_{j\in \Jcup, \, j\le N} S_j
$.
Applying again Lemma~\ref{lem:construction}~(v) we get
\begin{eqnarray*}
\left\| [\widehat{S}^*,\widehat{S}]- [\widehat{S}_N^*,\widehat{S}_N]  \right\|
    &=& \Big\|
        \mathop{\oplus}\limits_{j\in \Jcup , \, j> N}
         [S_j^*,S_j] \Big\|
         \; = \;
        \sup_{j\in \Jcup , \, j> N}
        \left\| [M_j^*,M_j] \right\| \\
   &\leq &  \frac{1}{\pi}\sup_{j\in \Jcup ,  \, j> N}
          \mathrm{area}(\Omega_j)\to 0 \qquad \text{as} \enspace N\to \infty
\end{eqnarray*}
(see Eq.~\eqref{area}).
Since $[\widehat{S}_N^*,\widehat{S}_N]$ is a trace-class operator, it follows that
the self-commutator $[R^*,R]$ can be approximated in norm by
trace-class operators, hence it is compact and we conclude that $R$ is essentially normal.
\end{proof}

\begin{cor}\label{hyponormal}
If $T\in\cL(\cH)$ is an $m$-multicyclic hyponormal operator, then $T$ has a proper F\o lner sequence.
\end{cor}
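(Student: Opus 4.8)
The plan is to deduce this from Theorem~\ref{main1} by reducing the corollary to a single compactness statement: I will show that every $m$-multicyclic hyponormal operator is essentially normal. Once essential normality is established, the existence of a proper F\o lner sequence is immediate from Theorem~\ref{main1}, so no new F\o lner-sequence construction is needed. In other words, the whole content of the corollary is the passage from the multicyclicity and hyponormality hypotheses to compactness of the self-commutator.

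The first step is to invoke the Berger-Shaw theorem, which applies directly because $T$ is assumed to be $m$-multicyclic and hyponormal. It yields that the self-commutator $[T^*,T]$ is of trace class, together with the bound $\Tr([T^*,T])\leq \frac{m}{\pi}\,\mathrm{area}(\sigma(T))$; for the present purpose only the qualitative trace-class conclusion is needed. The key observation is then that any trace-class operator is compact, so $[T^*,T]\in\cK(\cH)$. Since $[T,T^*]=-[T^*,T]$, this also gives $[T,T^*]\in\cK(\cH)$, which is precisely the definition of $T$ being essentially normal.

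Finally I would apply Theorem~\ref{main1} to the essentially normal operator $T$, concluding that $T$ has a proper F\o lner sequence, as claimed.

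I do not anticipate a genuine obstacle: the only substantial input, namely the trace-class property of the self-commutator, is exactly the conclusion of the Berger-Shaw theorem recalled above, and the chain ``trace class $\Rightarrow$ compact $\Rightarrow$ essentially normal'' is routine. The one point worth stating with care is that it is hyponormality that makes Berger-Shaw applicable in the first place; without it the self-commutator of an $m$-multicyclic operator need not be compact, so the reduction to Theorem~\ref{main1} would fail.
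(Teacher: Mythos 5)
Your proposal is correct and follows exactly the paper's own argument: Berger--Shaw gives that $[T^*,T]$ is trace class, hence compact, so $T$ is essentially normal and Theorem~\ref{main1} applies.
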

\begin{proof}
By Berger-Shaw Theorem
it follows that the self-commutator $[T^*,T]$ is trace-class and, therefore,
$T$ is essentially normal and the assertion follows from Theorem~\ref{main1}.
\end{proof}

We conclude this subsection mentioning that any quasinormal operator
(i.e., any operator $Q$ that commutes with $Q^*Q$) has a proper
F\o lner sequence. Recall also that an operator $T$ on $\cH$ is called subnormal
if there is a normal operator $N$ acting on a Hilbert space
$\widetilde{\cH}$ containing $\cH$ such that $\cH$ is invariant for $N$ and
$T$ is the restriction of $N$ to $\cH$. It can also be shown that any subnormal
operator has a proper F\o lner sequence.
See \cite{pLledoYak12} for details and also
Chapter~II in \cite{bConway91} for the relations between these classes of
operators.

\subsection{Finite operators}\label{subsec:Williams}

In this subsection we study the class of finite operators introduced by Williams in
\cite{Williams70} and their relation to proper F\o lner sequences.
(See also \cite{HerreroIn94}.)

We begin recalling the main definition and known results.

\begin{defn}
$T\in\cL(\cH)$ is called a \textit{finite operator} if
\[
 0\in\Big(W\left([T,X]\right)\Big)^{\mathrm{cl}}\quad\mathrm{for~all}\quad X\in\cL(\cH)\;,
\]
where $W(T)$ denotes the numerical range of the operator $T$, i.e.,
\[
W(T)=\{\langle Tx,x\rangle\mid x\in\cH\;\;,\;\;\|x\|=1 \}\;,
\]
and where the $(\cdot)^{\mathrm{cl}}$ means the closure of the corresponding subset in $\C$.
\end{defn}

We collect in the following theorem some standard results due to Williams about the class of finite operators
(cf.~\cite{Williams70}).

\begin{thm}[Williams]
An operator $T\in\cL(\cH)$ is finite if and only if
$C^*(T,\1)$ has an amenable trace. The class of finite operators
is closed in the operator norm and contains all finite block reducible
operators.
\end{thm}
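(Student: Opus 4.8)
The plan is to prove the three assertions separately, the heart of the matter being the equivalence with the existence of an amenable trace. Throughout I would use the standard duality between the numerical range and states: for every $Y\in\cL(\cH)$ the closure $\overline{W(Y)}$ (which is convex by the Toeplitz--Hausdorff theorem) coincides with the \emph{algebraic} numerical range $\{\psi(Y)\st \psi \text{ a state on }\cL(\cH)\}$. In particular $\psi(Y)\in\overline{W(Y)}$ for every state $\psi$, and conversely every point of $\overline{W(Y)}$ is of the form $\psi(Y)$ for some state $\psi$. The easy implication of the equivalence then falls out immediately: if $\tau$ is an amenable trace on $\cA=C^*(T,\1)$ with hypertrace $\psi$ on $\cL(\cH)$, then since $T\in\cA$ one has $\psi([T,X])=\psi(TX)-\psi(XT)=0$ for every $X\in\cL(\cH)$, whence $0=\psi([T,X])\in\overline{W([T,X])}$, i.e. $T$ is finite.

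The converse is where I expect the \emph{main obstacle}: the finiteness hypothesis gives, for each individual $X$, a state annihilating $[T,X]$, and one must manufacture a single state doing this for all $X$ at once. I would first turn finiteness into a distance estimate. Since $0\in\overline{W([T,X])}$, picking unit vectors $x_n$ with $\langle[T,X]x_n,x_n\rangle\to0$ gives $\langle(\1-[T,X])x_n,x_n\rangle\to1$, so $\|\1-[T,X]\|\ge1$ for every $X$. As $M:=\{[T,X]\st X\in\cL(\cH)\}$ is a linear subspace, this reads $\mathrm{dist}(\1,M)\ge1$. Consequently the functional $\alpha\1+Y\mapsto\alpha$ ($Y\in M$) on $\C\1+M$ has norm $1$ and value $1$ at $\1$; extending it by Hahn--Banach to $\phi\in\cL(\cH)^*$ with $\|\phi\|=\phi(\1)=1$ forces $\phi$ to be a state, and by continuity it annihilates $\overline{M}$, so $\phi(TX)=\phi(XT)$ for all $X$. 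To upgrade this to a genuine hypertrace I would invoke the observation that the set $\{A\in\cL(\cH)\st \phi(AX)=\phi(XA)\ \forall X\}$ is a unital, self-adjoint (using $\phi(Z^*)=\overline{\phi(Z)}$) and norm-closed subalgebra; containing $T$, it therefore contains all of $\cA$, so $\phi\rest\cA$ is the required amenable trace.

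For closedness in the operator norm, let $T_k\to T$ with each $T_k$ finite and fix $X$. Then $[T_k,X]-[T,X]=[T_k-T,X]\to0$ in norm, and the elementary bound $|\langle[T_k,X]x,x\rangle-\langle[T,X]x,x\rangle|\le\|[T_k,X]-[T,X]\|$, uniform over unit vectors $x$, shows that the points of $W([T_k,X])$ approximate those of $W([T,X])$. Since $0\in\overline{W([T_k,X])}$ for all $k$, it follows that $0\in\overline{W([T,X])}$; as $X$ was arbitrary, $T$ is finite.

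Finally, for a finite block reducible operator write $\cH=\cH_0\oplus\cH_1$ reducing $T$ with $d:=\dim\cH_0\in(0,\infty)$, and let $P_0$ be the projection onto $\cH_0$. I would exhibit the explicit state $\phi(Y):=\tfrac1d\Tr(P_0YP_0)$, which is well defined since $P_0$ is finite rank, is positive because $\phi(Y^*Y)=\tfrac1d\|YP_0\|_2^2\ge0$, and satisfies $\phi(\1)=\tfrac1d\Tr(P_0)=1$. Because $\cH_0$ reduces $T$ we have $P_0T=TP_0$, so cyclicity of the trace gives $\Tr(P_0XT)=\Tr(TP_0X)=\Tr(P_0TX)$ and hence $\phi([T,X])=\tfrac1d\Tr(P_0[T,X])=0$ for every $X$. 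Thus $0=\phi([T,X])\in\overline{W([T,X])}$, proving $T$ finite. (Note that this $\phi$ is precisely the tracial state $\Tr(\cdot\,P_0)/\Tr(P_0)$ attached to the trivial F\o lner sequence $P_n=\1_{\cH_0}\oplus0$, consistent with the remark following Proposition~\ref{prop:decomp}.)
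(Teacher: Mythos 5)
Your proof is correct. The paper itself gives no proof of this theorem --- it is quoted as a collection of standard results from Williams's 1970 article \cite{Williams70} --- and your argument (the identification $\overline{W(Y)}=\{\psi(Y)\st \psi \text{ a state on } \cL(\cH)\}$, the distance estimate $\mathrm{dist}(\1,\{[T,X]\st X\in\cL(\cH)\})\ge 1$ followed by Hahn--Banach to produce the hypertrace, the observation that $\{A\st\phi(AX)=\phi(XA)\ \forall X\}$ is a unital norm-closed $*$-subalgebra, and the explicit state $\Tr(\,\cdot\,P_0)/\Tr(P_0)$ for the finite block reducible case) is essentially Williams's original one, so the two approaches coincide.
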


It follows that the norm closure of the set of all
finite block reducible operators is contained in the class of finite operators.
Combining Williams' Theorem with Proposition~\ref{pro:hypertrace}, we get the following fact.

\begin{cor}
\label{cor:fin=Fol_seq}
For any operator $T\in \cL(H)$, the following properties are equivalent:
\begin{itemize}
\item[(i)] $T$ is finite;

\item[(ii)] $T$ has a F\o lner sequence;

\item[(iii)] $C^*(T, \1)$ has an amenable trace.
\end{itemize}
\end{cor}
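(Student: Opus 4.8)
The plan is to obtain the three-way equivalence by concatenating results already established in the excerpt, so that no genuinely new argument is required; the statement is a corollary in the literal sense. The backbone consists of two biconditionals meeting at condition (iii): Williams' Theorem supplies the equivalence (i) $\Leftrightarrow$ (iii) directly, while Proposition~\ref{pro-1}~(i) and Proposition~\ref{pro:hypertrace} together supply (ii) $\Leftrightarrow$ (iii). Chaining these yields all three equivalences.

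First I would record that (i) $\Leftrightarrow$ (iii) is exactly the content of Williams' Theorem as stated above, read for the unital C*-algebra $C^*(T,\1)$; here there is nothing further to prove. Next I would establish (ii) $\Leftrightarrow$ (iii) by routing through the intermediate statement that $C^*(T,\1)$ has a F\o lner sequence. On one side, Proposition~\ref{pro-1}~(i), applied with $\cT=\{T\}$, asserts that a sequence of non-zero finite rank projections is a F\o lner sequence for $T$ if and only if it is a F\o lner sequence for $C^*(T,\1)$; in particular $T$ has a F\o lner sequence precisely when $C^*(T,\1)$ does. On the other side, Proposition~\ref{pro:hypertrace} states that a separable unital C*-algebra admits a F\o lner sequence if and only if it admits an amenable trace. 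Applying this to $\cA=C^*(T,\1)$ identifies ``$C^*(T,\1)$ has a F\o lner sequence'' with (iii), and combined with the previous sentence gives (ii) $\Leftrightarrow$ (iii).

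The only step requiring any care, and hence the nearest thing to an obstacle, is verifying the hypotheses under which Proposition~\ref{pro:hypertrace} may be invoked: that $C^*(T,\1)$ is a \emph{separable} unital C*-algebra. Unitality is automatic, since $\1$ is adjoined by construction. Separability follows because $C^*(T,\1)$ is generated, as a C*-algebra, by the single operator $T$ together with the unit; the $*$-algebra of polynomials in $T$, $T^*$ and $\1$ with coefficients in $\Q+\ii\Q$ is countable and norm-dense, so $C^*(T,\1)$ is separable. With this observation in place the chain (i) $\Leftrightarrow$ (iii) $\Leftrightarrow$ (ii) is complete, and all three conditions are equivalent.
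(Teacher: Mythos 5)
Your proposal is correct and follows exactly the route the paper intends: Williams' Theorem gives (i) $\Leftrightarrow$ (iii), while Proposition~\ref{pro-1}~(i) together with Proposition~\ref{pro:hypertrace} gives (ii) $\Leftrightarrow$ (iii). The separability check for $C^*(T,\1)$ is a sensible added verification but does not change the argument.
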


The next result shows the strong link between finite operators and proper F\o lner sequences.
We include the proof, because it is short and illustrative (cf.~\cite[Theorem~4.1]{pLledoYak12}).

\begin{thm}\label{th:williams-foe}
Let $T\in\cL(\cH)$. Then, $T$ is a finite operator if and only if $T$ is finite block
reducible or $T$ has a proper F\o lner sequence.
\end{thm}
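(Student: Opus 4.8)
The plan is to prove both implications by combining the already-established equivalence of Corollary~\ref{cor:fin=Fol_seq} with the two structural results, Propositions~\ref{pro-1}(iii) and~\ref{prop:splitting}, the whole argument hinging on a dichotomy for the ranks of an arbitrary F\o lner sequence.

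The reverse implication is immediate. If $T$ has a proper F\o lner sequence, then in particular it has a F\o lner sequence, so Corollary~\ref{cor:fin=Fol_seq} gives that $T$ is finite. If instead $T$ is finite block reducible, then Williams' Theorem, which asserts that the class of finite operators contains all finite block reducible operators, again gives that $T$ is finite. So the disjunction on the right always forces $T$ to be finite.

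For the forward implication I would assume $T$ is finite but \emph{not} finite block reducible, and show $T$ has a proper F\o lner sequence. The first step is a reformulation of the negation of finite block reducibility: an orthogonal projection $P$ commutes with $T$ if and only if $\Ran P$ reduces $T$ (taking adjoints in $PT=TP$ shows $P$ commutes with $T^*$ as well, so $\Ran P$ is invariant under both $T$ and $T^*$). Consequently, $T$ is not finite block reducible precisely when $TP-PT\neq 0$ for every $P\in\cPfin(\cH)$, which is exactly the standing hypothesis of Proposition~\ref{prop:splitting}. By Corollary~\ref{cor:fin=Fol_seq}, finiteness of $T$ supplies a F\o lner sequence $\{P_n\}_n$, and the remaining task is to upgrade it to a proper one. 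Here I split into two cases according to $\{\dim P_n\}$. If the ranks are unbounded, Proposition~\ref{pro-1}(iii) applies verbatim and yields a proper F\o lner sequence for $T$. If the ranks are bounded, some value is attained infinitely often, so passing to a subsequence (still a F\o lner sequence) produces one of \emph{constant} rank; then Proposition~\ref{prop:splitting}, whose hypothesis was verified above, gives a proper F\o lner sequence for $T$. Either way the conclusion follows.

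The main obstacle is this forward direction: Corollary~\ref{cor:fin=Fol_seq} only delivers an \emph{arbitrary} F\o lner sequence, which need be neither increasing, nor strongly convergent to $\1$, nor of constant rank. The genuine content is already packaged in Propositions~\ref{pro-1}(iii) and~\ref{prop:splitting}, so the essential work of the proof is twofold: first, recognizing that the ``not finite block reducible'' assumption is exactly what recasts the commutation condition into the hypothesis required by Proposition~\ref{prop:splitting}; and second, observing that the two cases of bounded versus unbounded rank exhaust all possibilities, so that one of the two propositions always applies. (Note that under the ``not finite block reducible'' assumption $\dim\cH=\infty$ automatically, since every operator on a finite-dimensional space is finite block reducible, but this need not be invoked explicitly.)
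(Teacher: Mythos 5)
Your proposal is correct and follows essentially the same route as the paper: both directions reduce to Williams' theorem together with Proposition~\ref{pro:hypertrace} (packaged in your case as Corollary~\ref{cor:fin=Fol_seq}), and the forward direction splits into exactly the same two cases, invoking Proposition~\ref{prop:splitting} for bounded ranks (after passing to a constant-rank subsequence) and Proposition~\ref{pro-1}(iii) for unbounded ranks. The only cosmetic difference is that the paper phrases the dichotomy as ``either some $P\in\cPfin(\cH)$ commutes with $T$, or none does,'' whereas you frame it as the contrapositive via the equivalence between finite block reducibility and the existence of a commuting finite-rank projection.
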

\begin{proof}
(i) If $T$ is finite block reducible, then $T$ is a finite operator (cf.~\cite{Williams70}).
Moreover, if $T$ has a proper F\o lner sequence, then the C*-algebra $C^*(T,\1)$ has the same proper F\o lner sequence
and, by Proposition~\ref{pro:hypertrace}, it also has an amenable trace. Then,
by Williams' theorem (see also Theorem~4 in \cite{Williams70}) we conclude that $T$
is finite.

(ii) To prove the other implication, assume $T$ is a finite
operator. We consider several cases. If there exists a (non-zero)
$P\in\cPfin(\cH)$ such that $[T,P]=0$, then $T$ is finite block
reducible. Consider next the situation where $[T,P]\not=0$ for all
$P\in\cPfin(\cH)$. Since $T$ is finite we can use Williams' Theorem
to conclude that $C^*(T,\1)$ has an amenable trace. Applying
Proposition~\ref{pro:hypertrace} (see also Theorem~1.1 in
\cite{Bedos95}) we conclude that there exists a F\o lner sequence of
non-zero finite rank projections $\{P_n\}_n$, i.e., we have
\[
 \lim_{n\to\infty} \frac{\|[T,P_n]\|_2}{\|P_n\|_2} = 0\;.
\]
(Note that $P_n$ is not necessarily a proper F\o lner sequence in the sense of
Definition~\ref{def:Foelner}.) Two cases may appear:
if $\mathrm{dim}\,P_n\cH\leq m$ for some $m\in\N$, then choose a subsequence with constant rank
and by Proposition~\ref{prop:splitting} we conclude that $T$ has a proper F\o lner sequence.
If the dimensions of $P_n\cH$ are not bounded, then from Proposition~\ref{pro-1}~(iii) we also
have that $T$ has a proper F\o lner sequence.
\end{proof}

\subsection{Strongly non-F\o lner operators}\label{sbsc:strg-NF}

In the present subsection we study the operators with no F\o lner sequence. For this we introduce the
following notion of operator that is far from having a non-trivial finite dimensional reducing
subspace.

\begin{defn}\label{def:strongNfol}
Let $\cH$ be an infinite dimensional Hilbert space
and $T$ an operator on $\cH$. We will say that $T$  is {\em strongly non-F\o lner}
if there exists an $\eps >0$ such that all projections $P\in\cPfin(\cH)$ satisfy
\[
\frac{\|T P-P T\|_2}{\|P\|_2}\ge \eps\;.
\]
\end{defn}

The following result shows the structure of operators with no proper F\o lner sequence.
Its proof is long and technical and we refer to Section~3 in \cite{pLledoYak12}
for details.

\begin{thm}
\label{thm:str-foln}
Let $T\in\cL(\cH)$ with $\dim \cH=\infty$. Then $T$
has no proper F\o lner sequence if and only if $T$ has an orthogonal sum
representation $T=T_0\oplus \wt T$ on $\cH=\cH_0\oplus\HH$, where $\dim \cH_0 < \infty$
and $\wt T$ is strongly non-F\o lner.
\end{thm}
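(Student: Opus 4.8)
The plan is to treat the two implications separately, with the backward direction immediate and the forward direction carrying essentially all of the content. For the backward implication, suppose $T=T_0\oplus\TT$ with $\dim\cH_0<\infty$ and $\TT$ strongly non-F\o lner. By definition there is $\eps>0$ with $\|[\TT,P]\|_2/\|P\|_2\ge\eps$ for every $P\in\cPfin(\HH)$, so the defining ratio can never tend to $0$ and $\TT$ has no F\o lner sequence at all, in particular no proper one. Since $\dim\cH_0<\infty$, Proposition~\ref{prop:decomp} transfers this to $T$, which therefore has no proper F\o lner sequence.

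For the forward implication, assume $T$ has no proper F\o lner sequence and let $\cH_0$ be the closed linear span of all finite-dimensional reducing subspaces of $T$ (possibly $\{0\}$). A sum of reducing subspaces and the closure of a reducing subspace are again reducing, so $\cH_0$ reduces $T$; write $T=T_0\oplus\TT$ on $\cH=\cH_0\oplus\HH$ with $\HH=\cH_0^\perp$. If $\cL\subseteq\HH$ were a nonzero finite-dimensional reducing subspace of $\TT$ it would be one for $T$, hence contained in $\cH_0\cap\HH=\{0\}$; thus $[\TT,P]\neq0$ for every $P\in\cPfin(\HH)$, which is exactly the hypothesis needed later for Proposition~\ref{prop:splitting}.

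The key structural step, which I expect to be the main obstacle, is to show that $\cH_0$ itself splits as an orthogonal direct sum $\cH_0=\bigoplus_k\cH_k$ of finite-dimensional reducing subspaces. I would obtain this from a maximal pairwise-orthogonal family $\{\cH_k\}$ of finite-dimensional reducing subspaces (Zorn's lemma; countable by separability), setting $\cM=\overline{\bigoplus_k\cH_k}$ and checking $\cM=\cH_0$. The nontrivial point is that every finite-dimensional reducing subspace $\cL$ lies in $\cM$: otherwise the self-adjoint projection $Q=\1-P_\cM$ onto $\cM^\perp$ commutes with $T$ (hence with $T^*$), so $Q\cL$ is a nonzero finite-dimensional subspace with $T(Q\cL)=Q(T\cL)\subseteq Q\cL$ and $T^*(Q\cL)\subseteq Q\cL$, i.e.\ a finite-dimensional reducing subspace orthogonal to every $\cH_k$, contradicting maximality. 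Consequently $T_0=\bigoplus_k T|_{\cH_k}$ is block diagonal with finite-dimensional blocks.

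Two dichotomies then finish the argument. First, $\cH_0$ must be finite-dimensional: were it infinite-dimensional, the increasing projections onto $\bigoplus_{k\le n}\cH_k$ would commute with $T_0$ and converge strongly to $\1_{\cH_0}$, giving $T_0$ a proper F\o lner sequence, whence by the absorbing property of Proposition~\ref{localization} the operator $T=T_0\oplus\TT$ would have one too, contradicting the hypothesis. Thus $\dim\cH_0<\infty$, and since $\dim\cH=\infty$ we get $\dim\HH=\infty$, so that strong non-F\o lnerness of $\TT$ is meaningful. Second, $\TT$ is strongly non-F\o lner: if not, $\TT$ has a F\o lner sequence $\{P_n\}$, and either its ranks are unbounded, in which case Proposition~\ref{pro-1}(iii) produces a proper F\o lner sequence for $\TT$, or they are bounded, in which case a constant-rank subsequence together with $[\TT,P]\neq0$ for all $P$ lets Proposition~\ref{prop:splitting} do the same. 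Either way Proposition~\ref{prop:decomp} would upgrade this to a proper F\o lner sequence for $T$, a contradiction. Hence $\TT$ is strongly non-F\o lner, as required.
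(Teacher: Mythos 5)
Your argument is correct: both directions are sound, and every auxiliary result you invoke (Propositions~\ref{prop:decomp}, \ref{prop:splitting}, \ref{localization} and \ref{pro-1}(iii)) is applied within its hypotheses, including the necessary preliminary checks that $[\wt T,P]\neq 0$ for all $P\in\cPfin(\HH)$ before using Proposition~\ref{prop:splitting} and that $\dim\cH_0<\infty$ is established before Proposition~\ref{prop:decomp} is called. The paper itself omits the proof and defers to Section~3 of \cite{pLledoYak12}, so a line-by-line comparison is not possible here, but your route --- splitting off the closed span of all finite-dimensional reducing subspaces, showing it is finite-dimensional via the absorbing property of Proposition~\ref{localization}, and then running the bounded/unbounded-rank dichotomy on the orthogonal complement --- is precisely the argument the paper's stated toolkit is set up to deliver.
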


Next we mention some concrete examples of strongly non-F\o lner operators.
We will use the amenable trace that appears in
Proposition~\ref{pro:hypertrace} as an obstruction.
Recall the definition of the Cuntz algebra $\cO_n$ (cf.~\cite{Cuntz77,bDavidson96}):
it is the universal C*-algebra generated by $n\geq 2$
non-unitary isometries $S_1,\ldots,S_n$ with the property that their final
projections add up to the identity, i.e.,
\begin{equation}\label{RangeProj1}
 \sum_{k=1}^{n} S_k S_k^*=\1\,.
\end{equation}
This condition implies in particular that the range projections
are pairwise orthogonal, i.e.,
\begin{equation}
\label{Cuntz}
 S_l^* S_k=\delta_{lk}\1\,.
\end{equation}
It is easy to realize the Cuntz algebra on the complex Hilbert space $\ell_2$
of square summable sequences.

\begin{prop}\label{no-Foelner}
The Cuntz algebra $\cO_n$, $n\ge 2$, is singly generated and its generator
is strongly non-F\o lner.
\end{prop}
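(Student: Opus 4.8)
The plan is to treat the two assertions separately, the single generation first and then the strongly non-F\o lner property, which is where the content lies.

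For single generation I would take as known that $\cO_n$ is singly generated and fix a single generator $T$, realizing $\cO_n$ concretely on $\ell_2$ as indicated above, so that $T\in\cL(\ell_2)$ and $C^*(T,\1)=\cO_n$. Any such $T$ will do, since the argument below depends only on the identity $C^*(T,\1)=\cO_n$; in particular every single generator will turn out to be strongly non-F\o lner. That $\cO_n$ is singly generated is plausible from its structure --- it contains the unital copy of $M_n$ spanned by the matrix units $S_iS_j^*$ together with the isometry $S_1$, from which each $S_k=S_kS_1^*S_1$ is recovered --- and is the one genuinely external input to the proof.

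For the strongly non-F\o lner property I would use, exactly as announced before the statement, the amenable trace as an obstruction. The first step is to show that $\cO_n$ admits no tracial state at all. Indeed, if $\tau$ were a tracial state, then since each $S_k$ is an isometry, $S_k^*S_k=\1$, and traciality gives
\[
  \tau(S_kS_k^*)=\tau(S_k^*S_k)=\tau(\1)=1,\qquad k=1,\dots,n,
\]
whereas summing the defining relation $\sum_{k=1}^n S_kS_k^*=\1$ yields $\sum_{k=1}^n\tau(S_kS_k^*)=\tau(\1)=1$; hence $n=1$, contradicting $n\ge 2$. Since an amenable trace is in particular a trace, $\cO_n$ has no amenable trace.

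It then remains to chain the earlier results. By Proposition~\ref{pro:hypertrace} the absence of an amenable trace shows that $\cO_n=C^*(T,\1)$ has no F\o lner sequence, and by Proposition~\ref{pro-1}(i) a F\o lner sequence for $T$ is the same thing as a F\o lner sequence for $C^*(T,\1)$; hence $T$ has no F\o lner sequence. Finally I would note that for a single operator ``no F\o lner sequence'' is literally ``strongly non-F\o lner'': setting $m:=\inf\{\|TP-PT\|_2/\|P\|_2 : P\in\cPfin(\cH)\}$, if $m=0$ one selects $P_j$ with ratio below $1/j$ to produce a F\o lner sequence, so the absence of one forces $m>0$, which is precisely Definition~\ref{def:strongNfol} with $\eps=m$. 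The only real obstacle is thus the single-generation input; the trace computation and the chaining of Propositions~\ref{pro:hypertrace} and \ref{pro-1} are routine, and the passage from ``no F\o lner sequence'' to ``strongly non-F\o lner'' is a tautology at the level of the defining infimum. Should one prefer to avoid the tautology, Theorem~\ref{thm:str-foln} gives the same conclusion: $T$ has no proper F\o lner sequence, hence splits as $T_0\oplus\wt T$ with $\dim\cH_0<\infty$ and $\wt T$ strongly non-F\o lner, and $\cH_0\ne 0$ is impossible since it would make $T$ finite block reducible, hence finite, hence possessed of a F\o lner sequence by Corollary~\ref{cor:fin=Fol_seq}.
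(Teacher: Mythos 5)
Your proof is correct and follows essentially the same route as the paper: single generation is an external citation (the paper quotes Corollary~4 of \cite{Olsen76}), and the strongly non-F\o lner property is obtained from the non-existence of an amenable trace on $\cO_n$, computed from the Cuntz relations exactly as in the paper's proof. The only cosmetic difference is that you chain Proposition~\ref{pro:hypertrace}, Proposition~\ref{pro-1}(i) and the definitional equivalence ``no F\o lner sequence $\Leftrightarrow$ strongly non-F\o lner'' for a single operator, whereas the paper routes the same implication through the finite-operator corollaries \ref{cor:yes-no}(ii) and \ref{cor:fin=Fol_seq}; both reductions are valid.
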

\begin{proof}
By Corollary~4 (or Theorem~9) in \cite{Olsen76} any Cuntz algebra
$\cO_n$, $n\geq 2$, has a single generator $C_n$, i.e.,
$\cO_n=C^*(C_n)$. We assert that $C_n$ is strongly non-F\o lner.
Indeed, assume that, to the contrary, it is not; then
by Corollary~\ref{cor:yes-no} (ii), $C_n$ is finite. By
Corollary~\ref{cor:fin=Fol_seq},
it would follow that $\cO_n=C^*(C_n)$ has an amenable trace $\tau$.
But this gives a contradiction since applying $\tau$ to the equations
\eqref{RangeProj1} and \eqref{Cuntz}
we obtain $n=1$.
\end{proof}

Other examples of a strongly non-F\o lner operators can be obtained from
the proof of Theorem~5 in \cite{Halmos54}. It is also worth mentioning that Corollary~4 in
\cite{Bunce76} gives an example of a strongly non-F\o lner operator generating a type~$II_1$ factor.

Theorem~\ref{th:williams-foe} allows to divide the class of bounded linear operators into the
following mutually disjoint subclasses summarized in the following table:

\begin{table}[h]
\centering 
  \begin{tabular}{| c || c | c | }
    \hline
    \phantom{phantom}   &  Operators with a proper   &    Operators with no proper \\[0.1ex]
    \phantom{phantom}   &  F\o lner sequence         &    F\o lner sequence          \\
                           \hline\hline
Finite block reducible     &             $\mathcal{W}_{0+}$       &                  $\mathcal{W}_{0-}$    \\ 
\hline
Non finite block reducible &            $\mathcal{W}_{1+}$        &          $\mathcal{S}$     
\\ 
    \hline
  \end{tabular}
\vskip.3cm
 \caption{} 
\end{table}

Finally, we conclude this analysis with the following immediate consequences:

\begin{cor}\label{cor:yes-no}
Let $T\in\cL(\cH)$. Then
\begin{itemize}
 \item[(i)] $T$ is a finite operator if and only if $T$ is in one of the following mutually
disjoint classes: $\mathcal{W}_{0+}$, $\mathcal{W}_{0-}$, $\mathcal{W}_{1+}$.
 \item[(ii)] $T$ is not a finite operator (i.e., it is of class $\mathcal{S}$) if and only if $T$ is strongly non-F\o lner.
 \item[(iii)] The class of strongly non-F\o lner operators is open and dense in $\cL(\cH)$.
\end{itemize}
\end{cor}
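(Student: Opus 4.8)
The plan is to obtain all three parts by reading off the partition recorded in the table, using Theorem~\ref{th:williams-foe}, Corollary~\ref{cor:fin=Fol_seq} and Williams' theorem, while keeping track of the single quantity
\[
c(T) := \inf_{P\in\cPfin(\cH)} \frac{\|TP - PT\|_2}{\|P\|_2}
\]
as a bookkeeping device. For (i) I would simply unwind Theorem~\ref{th:williams-foe}, which says that $T$ is finite if and only if $T$ is finite block reducible or $T$ has a proper F\o lner sequence. In the notation of the table the finite block reducible operators are $\mathcal{W}_{0+}\cup\mathcal{W}_{0-}$ and the operators with a proper F\o lner sequence are $\mathcal{W}_{0+}\cup\mathcal{W}_{1+}$, so their union is exactly $\mathcal{W}_{0+}\cup\mathcal{W}_{0-}\cup\mathcal{W}_{1+}$. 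Since the four cells of the table partition $\cL(\cH)$, these three classes are mutually disjoint, which is the assertion.

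For (ii) I would first note that by Corollary~\ref{cor:fin=Fol_seq} an operator is finite precisely when it admits a F\o lner sequence, so by part~(i) the class $\mathcal{S}$ consists of exactly those operators with no F\o lner sequence. It then remains to identify ``no F\o lner sequence'' with ``strongly non-F\o lner'', and this is immediate from the dichotomy $c(T)=0$ versus $c(T)>0$: a F\o lner sequence exists if and only if $c(T)=0$ (selecting projections that realise ratios below $1/n$), whereas $T$ is strongly non-F\o lner if and only if $c(T)>0$. As these two conditions are complementary, $T\in\mathcal{S}$ if and only if $T$ is strongly non-F\o lner.

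For (iii), openness is immediate: by (ii) the strongly non-F\o lner operators form the complement of the finite operators, and the latter set is norm-closed by Williams' theorem. (Equivalently, the elementary estimate $\|[T-S,P]\|_2\le 2\|T-S\|\,\|P\|_2$ shows that $c$ is $2$-Lipschitz, so $\mathcal{S}=c^{-1}\big((0,\infty)\big)$ is open.) The density is the substantive point and the step I expect to be hardest. By openness it is equivalent to proving that the finite operators have empty interior, i.e.\ that every ball contains an operator $T'$ with $c(T')>0$. The natural source of such operators is suggested by Proposition~\ref{no-Foelner}: any $T'$ whose unital C*-algebra $C^*(T',\1)$ carries no tracial state has no amenable trace, hence is not finite by Corollary~\ref{cor:fin=Fol_seq}, and is therefore strongly non-F\o lner. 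The real difficulty is to realise such a C*-algebra by an arbitrarily small operator-norm perturbation of a \emph{prescribed} $T$: a strongly non-F\o lner operator is necessarily far from every quasidiagonal operator, so a crude perturbation $T+\eps C$ by a Cuntz generator $C$ need not succeed, since in the intermediate regime where $\|[T,P]\|_2$ is comparable to $\eps\|[C,P]\|_2$ the two commutators may partially cancel and reintroduce nearly reducing projections. Controlling this cancellation uniformly over all $P\in\cPfin(\cH)$ is the core of the argument, and I would carry it out along the lines of the construction in \cite{pLledoYak12}.
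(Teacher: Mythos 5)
Parts (i) and (ii) are correct and essentially coincide with the paper's (very terse) argument: the paper likewise reads (i) off Theorem~\ref{th:williams-foe} together with the table, and obtains (ii) from Williams' theorem via Corollary~\ref{cor:fin=Fol_seq}. Your bookkeeping quantity $c(T)$ and the observation that ``has a F\o lner sequence'' $\Leftrightarrow c(T)=0$ while ``strongly non-F\o lner'' $\Leftrightarrow c(T)>0$ is a clean way to make the identification in (ii) explicit, and your Lipschitz estimate for $c$ gives openness in (iii) without even appealing to Williams' theorem. Up to this point there is nothing to object to.

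The genuine gap is the density claim in (iii), which you correctly single out as the substantive step but then do not prove: you reduce it to showing that the finite operators have empty interior and sketch a perturbation $T+\eps C$ by a Cuntz generator, conceding that the possible cancellation between $[T,P]$ and $\eps[C,P]$ over all $P\in\cPfin(\cH)$ is uncontrolled. As it stands this is a plan, not an argument, and the cancellation issue you identify is real. The paper does not attempt any such perturbation: it simply invokes Herrero's theorem \cite{Herrero89} that the set of finite operators is closed and \emph{nowhere dense} in $\cL(\cH)$; combined with part (ii), its complement --- the strongly non-F\o lner operators --- is then immediately open and dense. So the missing ingredient is precisely this external result (itself the content of a separate paper of Herrero, proved by quite different techniques), and without citing it or reproving it your treatment of (iii) is incomplete.
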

\begin{proof}
The characterization of finite operators and its complement stated in (i) and (ii)
follows from Theorem~\ref{th:williams-foe}
and Williams' theorem.
To prove part (iii) we use that the class of finite
operators is closed and nowhere dense (cf.~\cite{Herrero89}).
Therefore the set of strongly non-F\o lner operators is an
open and dense subset of $\cL(\cH)$.
\end{proof}

As a summary let us mention that proper F\o lner sequences for operators provide a useful
and natural tool to analyze the class of finite operators. To illustrate this with an
example note that the preceding corollary already implies that the class of finite operators is
closed in $\cL(H)$.

\section{F\o lner sequences in operator algebras}\label{sec:algebras}

We start the analysis of F\o lner sequences in the context of operator algebras stating
some approximation results for amenable traces. We will apply them to spectral approximation
problems of scalar spectral measures. In the final part of this section we will give
an abstract characterization in terms of unital completely positive maps of C*-algebras
admitting a faithful essential representation which has a F\o lner sequence or, equivalently, an
amenable trace.

\subsection{Approximations of amenable traces}\label{subsec:approx}

Part~(i) of the following result is a standard weak*-compactness argument. Part~(ii)
is known to experts (see, e.g., Exercise~6.2.6 in \cite{bBrown08}) or
\cite{AraLledo12} for a complete proof).

\begin{prop}\label{exercise}
Let $\cA\subset\cL(\cH)$ be a unital separable C*-algebra.
\begin{itemize}
\item[(i)] If $\cA$ has a F\o lner sequence
$\{P_n\}_n$, then $\cA$ has an amenable trace.

\item[(ii)]
Assume that $\cA\cap\cK(\cH)=\{0\}$, and let $\tau$ be an amenable
trace on $\cA$. Then $\cA$ has a F\o lner sequence $\{P_n\}_n$
satisfying
\begin{equation}\label{eq:conv-trace}
\tau(A)=\lim_{n\to\infty}\frac{\mathrm{Tr}(AP_n)}{\mathrm{Tr}(P_n)}\;,\quad A\in\cA \;,
\end{equation}
where $\mathrm{Tr}$ denotes the canonical trace on $\cL(\cH)$.
\end{itemize}
\end{prop}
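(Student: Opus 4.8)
The plan is to establish the two parts separately, relying on the equivalence between Følner sequences and amenable traces from Proposition~\ref{pro:hypertrace} together with a weak*-compactness argument for the functionals $A\mapsto \Tr(AP_n)/\Tr(P_n)$.

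For part~(i), the key observation is that a Følner sequence $\{P_n\}_n$ gives rise to a sequence of tracial-type states on $\cL(\cH)$ defined by $\psi_n(X):=\Tr(XP_n)/\Tr(P_n)$. Each $\psi_n$ is a state on $\cL(\cH)$ since $P_n$ is a non-zero positive finite-rank operator. First I would invoke weak*-compactness of the state space of $\cL(\cH)$ (Banach--Alaoglu) to extract a weak*-convergent subnet $\psi_{n_\alpha}\to\psi$; the limit $\psi$ is again a state on $\cL(\cH)$. The crucial step is to check that $\psi$ is a hypertrace for $\cA$, i.e.\ that $\psi(XA)=\psi(AX)$ for all $X\in\cL(\cH)$ and $A\in\cA$. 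The estimate controlling the asymmetry is
\begin{equation*}
|\psi_n(XA)-\psi_n(AX)|=\frac{|\Tr(X[A,P_n])|}{\Tr(P_n)}\le \|X\|\,\frac{\|[A,P_n]\|_1}{\|P_n\|_1}\,,
\end{equation*}
where I have used $\Tr(P_n)=\|P_n\|_1$ and cyclicity of the trace to move $P_n$ past $X$, and then bounded the trace by the trace-norm. By Proposition~\ref{pro-1}~(ii), the Følner condition in the Hilbert--Schmidt norm is equivalent to the trace-norm version \eqref{F1}, so the right-hand side tends to $0$ along the sequence and hence along the subnet. Passing to the limit yields $\psi(XA)=\psi(AX)$, so $\tau:=\psi\!\upharpoonright\!\cA$ is an amenable trace. (Alternatively, one could simply cite Proposition~\ref{pro:hypertrace}, but since $\cA$ need not be realized with $\cA\cap\cK(\cH)=\{0\}$, and since the statement records the precise weak*-limit construction, I prefer to exhibit the hypertrace explicitly.)

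For part~(ii), the goal is the converse together with the explicit convergence \eqref{eq:conv-trace}. Here I am given an amenable trace $\tau$ and the hypothesis $\cA\cap\cK(\cH)=\{0\}$. Proposition~\ref{pro:hypertrace} already produces \emph{some} Følner sequence $\{P_n\}_n$, but it gives no control over the limit of $\Tr(AP_n)/\Tr(P_n)$; achieving the prescribed limit is the substance of the statement. The natural approach is a finite-dimensional approximation/quasidiagonality argument: the existence of the hypertrace $\psi$ extending $\tau$, combined with $\cA\cap\cK(\cH)=\{0\}$ (which guarantees that $\cA$ maps injectively into the Calkin algebra and that the representation is genuinely essential), should allow one to produce, for any finite subset $F\subset\cA$ and any $\eps>0$, a finite-rank projection $P$ with
\begin{equation*}
\frac{\|[A,P]\|_2}{\|P\|_2}<\eps
\quad\text{and}\quad
\Big|\,\tau(A)-\frac{\Tr(AP)}{\Tr(P)}\,\Big|<\eps
\qquad (A\in F)\,.
\end{equation*}
Iterating over an increasing sequence of finite subsets exhausting a countable dense subset of $\cA$ (using separability) and a sequence $\eps_n\to 0$ then produces the desired sequence $\{P_n\}_n$.

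The hard part will be part~(ii): constructing projections that \emph{simultaneously} witness the Følner condition and reproduce the given trace value $\tau(A)$ in the limit. This is exactly the content of the Ozawa/Brown approximation theorem cited in the excerpt as \cite[Theorem~6.1]{Ozawa04} and \cite[Theorem~6.2.7]{bBrown08}. The mechanism is that an amenable trace factors approximately through matrix algebras via unital completely positive maps that are approximately multiplicative and approximately trace-preserving; pulling back the normalized matrix trace to finite-rank compressions on $\cH$, and using that $\cA$ meets $\cK(\cH)$ only in $\{0\}$ to arrange that these compressions can be taken to be genuine projections converging strongly and with unbounded rank, yields the two simultaneous estimates above. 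I would therefore organize the proof of~(ii) around invoking this approximation machinery and then verifying, via a standard diagonal-subsequence extraction over an increasing family of finite subsets, that the resulting $\{P_n\}_n$ is a Følner sequence satisfying \eqref{eq:conv-trace}; the technical heart is the passage from the abstract approximately-multiplicative u.c.p.\ maps to the concrete finite-rank projections.
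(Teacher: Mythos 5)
Your proposal is correct and follows essentially the same route as the paper, which itself only remarks that part~(i) is a standard weak*-compactness argument (exactly the hypertrace construction via $\psi_n(X)=\mathrm{Tr}(XP_n)/\mathrm{Tr}(P_n)$ and the trace-norm Følner estimate that you spell out) and for part~(ii) defers to the Ozawa/Brown approximation theorem and to \cite{AraLledo12}. Your treatment of (i) is a complete and accurate expansion of that remark, and your outline of (ii) correctly identifies that the simultaneous control of the commutator and of the trace is the real content, handled by the same cited machinery the paper invokes.
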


We will now present an application of
Proposition~\ref{exercise}~(ii) to obtain an approximation
result for scalar spectral measures. For this we need
to recall from \cite{Bedos97} the definition of Szeg\"o
pairs for a concrete C*-algebra $\cA\subset\cL(\cH)$. This notion
incorporates the good spectral approximation behavior of scalar
spectral measures of selfadjoint elements in $\cA$ and is motivated
by Szeg\"o's classical approximation results mentioned in the introduction.

Let $\cA$ be a unital C*-algebra acting on $\cH$ and let $\tau$ be
a tracial state on $\cA$. For any self\-adjoint element $T\in\cA$
we denote by $\mu_T$ the spectral measure associated with the trace
$\tau$ of $\cA$. Consider a sequence $\{P_n\}_n$ of non-zero finite
rank projections on $\cH$ and write the corresponding (selfadjoint)
compressions as $T_n:=P_n T P_n$. Denote by $\mu_T^n$ the
probability measure on $\R$ supported on the spectrum of $T_n$,
i.e., for any $T=T^*\in\cA$ we have
\[
 \mu_T^n(\Delta):=\frac{N_T^n(\Delta)}{\|P_n\|_1}\;,\quad \Delta\subset\R\quad \mathrm{Borel}\;,
\]
where $N_T^n(\Delta)$ is the number of eigenvalues of $T_n$
(multiplicities counted) contained in $\Delta$. We say that $\left(
\{P_n\}_n \,,\,\tau\right)$ is a {\it Szeg\"o pair} for $\cA$ if
$\mu_T^n\to \mu_T$ weakly for all selfadjoint elements $T\in\cA$,
i.e.,
\[
 \lim_{n\to\infty}
  \frac{1}{d_n}
  \Big( f(\lambda_{1,n})+\dots+f(\lambda_{d_n,n})\Big) =\int f(\lambda) \, d\mu_T(\lambda)
  \;,\quad f\in C_0(\R)  \;,
\]
where $d_n=\|P_n\|_1$ is the dimension of the $P_n\cH$ and
$\{\lambda_{1,n},\dots,\lambda_{d_n,n}\}$ are the eigenvalues
(repeated according to multiplicity) of $T_n$.

By \cite[Theorem 6~(i), (ii)]{Bedos97}, if $\left( \{P_n\}_n
\,,\tau\right)$ is a Szeg\"o pair for $\cA$, then $\{P_n\}_n$
must be a F\o lner sequence for $\cA$, $\tau$ must be an amenable
trace, and equation (\ref{eq:conv-trace}) must hold for every $A\in
\cA$. Proposition~\ref{exercise}~(ii) allows one to complete {\it any}
amenable trace $\tau$ on $\cA$ with a F\o lner sequence so that the
pair $\left( \{P_n\}_n \,,\,\tau\right)$ is a {\it Szeg\"o pair}
for $\cA$, as follows. The proof of the following result requires
the construction of an increasing sequence of operators that approximates
simultaneously the corresponding commutator and the amenable trace.
We refer to Theorem~3.2 in \cite{AraLledo12} for details

\begin{thm}\label{teo:apply}
Let $\cA$ be a unital, separable C*-algebra acting on a separable Hilbert
space $\cH$, and assume that $\cA \cap \cK (\cH)= \{ 0 \}$.  If
$\tau$ is an amenable trace on $\cA$, then there exists a proper
F\o lner sequence $\{P_n\}_n$ such that $\left( \{P_n\}_n
\,,\,\tau\right)$ is a Szeg\"o pair for $\cA$.
\end{thm}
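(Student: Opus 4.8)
The plan is to split the statement into two independent parts: a \emph{verification} that any F\o lner sequence satisfying \eqref{eq:conv-trace} automatically pairs with $\tau$ to form a Szeg\"o pair, and a \emph{construction} turning the (generally non-proper) F\o lner sequence handed to us by Proposition~\ref{exercise}~(ii) into a proper one without spoiling \eqref{eq:conv-trace}. I would do the verification first, since it pins down exactly which properties of the sequence must survive the construction.

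For the verification, fix a self-adjoint $T\in\cA$ and recall that $\int f\,d\mu_T^n=\Tr\big(f(P_nTP_n)\big)/\Tr(P_n)$ while $\int f\,d\mu_T=\tau(f(T))$ for $f\in C_0(\R)$. Since each $T_n=P_nTP_n$ has spectrum in $[-\|T\|,\|T\|]$ and $\mu_T$ is supported there as well, by the Weierstrass theorem weak convergence $\mu_T^n\to\mu_T$ reduces to convergence of moments, i.e.\ $\Tr\big((P_nTP_n)^m\big)/\Tr(P_n)\to\tau(T^m)$ for every $m$. The key point is an asymptotic multiplicativity of compressions: using $(\1-P_n)T^{m-1}P_n=[T^{m-1},P_n]P_n$ together with the F\o lner condition \eqref{eq:F1} applied to $T^{m-1}\in\cA$, an induction on $m$ yields $\|(P_nTP_n)^m-P_nT^mP_n\|_2/\|P_n\|_2\to 0$. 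Estimating $|\Tr B|\le\|P_n\|_2\,\|B\|_2$ for operators $B$ supported on $P_n\cH$ converts this into $\big|\Tr((P_nTP_n)^m)-\Tr(T^mP_n)\big|/\Tr(P_n)\to 0$, and then \eqref{eq:conv-trace} applied to $T^m\in\cA$ supplies $\Tr(T^mP_n)/\Tr(P_n)\to\tau(T^m)$. This is exactly the sufficiency converse to the necessity direction of \cite[Theorem~6]{Bedos97} quoted above.

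For the construction, start with a F\o lner sequence $\{Q_m\}$ satisfying \eqref{eq:conv-trace} (Proposition~\ref{exercise}~(ii)), which we may assume to satisfy $\dim Q_m\to\infty$, and fix any increasing $\{E_n\}\subset\cPfin(\cH)$ with $E_n\to\1$ strongly. I would build $\{P_n\}$ inductively by an absorption trick: given $P_{n-1}$, set $r_n=\dim P_{n-1}\cH+\dim E_n\cH$, choose $m_n$ so large that $Q_{m_n}$ meets the F\o lner and trace estimates for a fixed dense sequence $A_1,\dots,A_n\in\cA$ to within $1/n$ while $\dim Q_{m_n}\cH\ge n^2 r_n$, and put $P_n=(P_{n-1}\vee E_n)\vee Q_{m_n}$. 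Then $P_n\ge P_{n-1}$ and $P_n\ge E_n$, so $\{P_n\}$ is increasing with $P_n\to\1$ strongly; moreover $P_n\ge Q_{m_n}$ with $G_n:=P_n-Q_{m_n}$ a projection of rank $\le r_n$. From $\|[A,P_n]\|_2\le\|[A,Q_{m_n}]\|_2+2\|A\|\sqrt{r_n}$, $\|P_n\|_2\ge\sqrt{\dim Q_{m_n}\cH}$, $|\Tr(AG_n)|\le\|A\|\,r_n$, and $\Tr(P_n)=\Tr(Q_{m_n})+\mathrm{rank}(G_n)$, the finite-rank corrections are negligible against $\dim Q_{m_n}\cH\ge n^2 r_n$, so $\{P_n\}$ is still F\o lner and still satisfies \eqref{eq:conv-trace} on the dense set. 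Both properties then pass to all of $\cA$ by the Lipschitz bounds $\|[A,P_n]\|_2/\|P_n\|_2\le\|[B,P_n]\|_2/\|P_n\|_2+2\|A-B\|$ and $|\Tr(AP_n)-\Tr(BP_n)|\le\|A-B\|\,\Tr(P_n)$, using continuity of $\tau$.

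The genuine obstacle is the construction: forcing the sequence to increase to $\1$ while simultaneously controlling the commutator ratio and the trace ratio. The absorption works only because enlarging $Q_{m_n}$ to contain $P_{n-1}\vee E_n$ costs rank bounded independently of $m_n$, so one must first secure F\o lner projections of arbitrarily large rank; this is precisely where $\cA\cap\cK(\cH)=\{0\}$ is essential, and it is the technical heart carried out in \cite[Theorem~3.2]{AraLledo12}. By contrast, once such a proper sequence is in hand, the moment-convergence verification is routine.
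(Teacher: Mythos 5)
Your proposal follows essentially the same route as the paper, which itself gives no details and defers the whole argument to \cite[Theorem~3.2]{AraLledo12}: starting from Proposition~\ref{exercise}~(ii), build an increasing sequence of projections converging strongly to $\1$ that simultaneously controls the normalized commutators and the trace ratios (\ref{eq:conv-trace}), and then verify the Szeg\"o property by moment convergence (the converse direction of the B\'edos result quoted before the statement); your moment computation and the absorption estimates are correct. The one caveat is that the reduction ``we may assume $\dim Q_m\to\infty$'' is not automatic from Proposition~\ref{exercise}~(ii) as stated (for $\cA=\C\1$ a bounded-rank F\o lner sequence with (\ref{eq:conv-trace}) exists), and securing F\o lner--trace projections of arbitrarily large rank under the hypothesis $\cA\cap\cK(\cH)=\{0\}$ is precisely the technical point you correctly flag and defer to \cite{AraLledo12}, so your outline sits at the same level of completeness as the paper's own treatment.
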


\begin{rem}
We conclude this subsection recalling that an
important step in the proof of the Arveson-B\'edos spectral approximation
results mentioned in the introduction is the compatibility between the choice of
the F\o lner sequence in the Hilbert space and the amenable trace.
In fact, if a unital and separable concrete
C*-algebra $\cA\subset\cL(\cH)$ has an amenable trace $\tau$
and $\{P_n\}_n$ is a F\o lner sequence of non-zero
finite rank projections for $\cA$ it is needed that
the projections approximate the amenable trace in the following
natural sense
\begin{equation}\label{eq:approx-trace}
 \tau (A)=\lim_{n\to\infty} \frac{{\mathrm{Tr}}(AP_n)}{\mathrm{Tr}(P_n)}\;,\quad A\in \cA\;.
\end{equation}

Now given $\cA\subset\cL(\cH)$ with an amenable trace $\tau$
it is possible to construct a F\o lner sequence in different ways.
As observed by B\'edos in \cite{Bedos95} one way to obtain a
F\o lner sequence $\{P_n\}$ for $\cA\subset\cL (\cH )$ is
essentially contained in \cite{Connes76,ConnesIn76}.
In these articles Connes adapts the group theoretic methods by Day and
Namioka to the context of operators. Using this technique one loses
track of the initial amenable trace $\tau$, in the sense that the sequence
$\{ P_n\}$ does not necessarily satisfy (\ref{eq:approx-trace}). To avoid
this problem one may assume in addition that $\cA$ has a unique tracial
state. This is sufficient to guarantee a good spectral approximation
behavior of relevant examples like almost Mathieu
operators, which are contained in the irrational rotation algebra
(cf.~\cite{bBoca01}).

In contrast with the previous method, the
construction of a F\o lner sequence given in
\cite[Theorem 6.1]{Ozawa04} (see also \cite[Theorem 6.2.7]{bBrown08})
allows one to approximate the original trace as in
Eq.~(\ref{eq:approx-trace}). In the precedent theorem
it was crucial to use this method
to prove a spectral approximation result in the spirit
of Arveson and B\'edos, but removing the hypothesis of a unique trace
(compare Theorem~\ref{teo:apply} with
\cite[Theorem~1.3]{Bedos95} or \cite[Theorem~6~(iii)]{Bedos97}
and the formulation in p.~354 of
\cite{Arveson94}).
\end{rem}

\subsection{F\o lner C*-algebras}\label{sbsc:Foe-alg}

The existence of a F\o lner sequence for a set of operators $\mathcal{T}$ is a
weaker notion than quasidiagonality. Recall that a set
of operators $\mathcal{T}\subset\mathcal{L}(\mathcal{H})$ is said to
be quasidiagonal if there exists an increasing sequence of finite-rank
projections $\{P_n\}_{n\in \N}$ converging strongly to $\1$
and such that
\begin{equation}\label{QD}
\lim_{n}\|T P_n-P_n T\|=0\;,\quad T\in\mathcal{T}\;.
 \end{equation}
(See, e.g., \cite{Halmos70,Voiculescu93} or Chapter~16 in \cite{bBrown08}.)
The existence of proper F\o lner sequences can be understood as a
quasidiagonality condition, but relative to the growth of the
dimension of the underlying spaces. It can be easily shown that if
$\{P_n\}_n$ quasidiagonalizes a family of operators $\cT$, then this
sequence of non-zero finite rank orthogonal projections is also a
proper F\o lner sequence for $\cT$.
The unilateral shift is a basic example that shows the difference between
the notions of proper F\o lner sequences and quasidiagonality.
It is a well-known fact that the unilateral shift $S$
is not a quasidiagonal operator. (This was shown by Halmos in \cite{Halmos68}; in
fact, in this reference it is shown that $S$ is not even quasitriangular.)
In the setting of abstract C*-algebras it can also be shown that a C*-algebra
containing a non-unitary isometry is not quasidiagonal
(see, e.g., \cite{BrownIn04,bBrown08}).

In \cite{voicu91}, Voiculescu
characterized abstractly quasidiagonality for unital separable
C*-algebras in terms of unital completely positive
(u.c.p.) maps\footnote{Recall that in this context a linear map
$\varphi\colon\cA\to\cB$ between unital C*-algebras $\cA$, $\cB$
is called {\em unital completely positive (u.c.p.)}, if $\varphi(\1)=\1$ and
if the inflations $\varphi_n:=\varphi\otimes\mathrm{id}_n\colon\cA\otimes M_n(\C)
\to \cB\otimes M_n(\C)$ are positive for all $n\geq 1$.}
(see also \cite{Voiculescu93}).
This has become by now the
standard definition of quasidiagonality for operator algebras (see,
for example, \cite[Definition~7.1.1]{bBrown08}):

\begin{defn}
\label{def:abstractqd} A unital separable C*-algebra $\cA$ is called
{\em quasidiagonal} if there exists a sequence of u.c.p. maps
$\varphi _n \colon \cA\to M_{k(n)}(\mathbb C)$ which is both
asymptotically multiplicative (i.e., $\| \varphi_n (AB) -\varphi_n
(A)\varphi_n (B) \| \to 0$ for all $A,B\in \cA$) and asymptotically
isometric (i.e., $\| A \| =\lim _{n\to \infty} \| \varphi_n (A) \| $
for all $A\in \cA$).
\end{defn}

Inspired by Voiculescu's work on quasidiagonality we introduce in
this section an abstract definition of a F\o lner C*-algebra and
formulate our main result characterizing F\o lner C*-algebras in terms
of F\o lner sequences and also of amenable traces.

Recall that $\mathrm{tr}(\cdot)$ denotes the unique tracial state on a matrix algebra
$M_{n}(\C)$.
\begin{defn}\label{def:FA}
Let $\cA$ be a unital, separable C*-algebra.
\begin{itemize}
 \item[(i)] We say that $\cA$ is a {\em F\o lner C*-algebra} if there exists a sequence of u.c.p. maps
$\varphi_n\colon\cA\to M_{k(n)}(\C)$ such that
\begin{equation}\label{eq:mult-2}
\lim_n\|\varphi_n(AB)-\varphi_n(A)\varphi_n(B)\|_{2,\mathrm{tr}}=0\;,\quad A,B\in\cA\;,
\end{equation}
where $\|F\|_{2,\mathrm{tr}}:=\sqrt{\mathrm{tr}(F^*F)}$, $F\in M_{n}(\C)$\;.
 \item[(ii)]  We say that $\cA$ is a {\em proper F\o lner C*-algebra} if there exists a sequence of u.c.p. maps
  $\varphi_n\colon\cA\to M_{k(n)}(\C)$ satisfying (\ref{eq:mult-2}) and which, in addition,
  are asymptotically isometric, i.e.,
\begin{equation}\label{eq:norm}
   \|A\|=\lim_n\|\varphi_n(A)\|\;,\quad A\in\cA\;.
\end{equation}
\end{itemize}
\end{defn}

It is clear that if $\cA$ is a separable, unital and quasidiagonal
C*-algebra (cf.~Definition \ref{def:abstractqd}), then $\cA$ is a
proper F\o lner algebra. The Toeplitz algebra serves as a
counter-example to the reverse implication.

Although, in principle, the two concepts--F\o lner and proper 
F\o lner--seem to be different for C*-algebras, we can show that they indeed define the
same class of unital, separable C*-algebras. The proof of the next
proposition includes a useful trick so that we will include it here
(cf.~\cite[Proposition~3.2]{AraLledo12}).

\begin{prop}
\label{prop:F=PF} Let $\cA$ be a unital separable C*-algebra. Then
$\cA$ is a F\o lner C*-algebra if and only if $\cA$ is a proper
F\o lner C*-algebra.
\end{prop}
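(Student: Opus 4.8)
The reverse implication is immediate from Definition~\ref{def:FA}, since a proper F\o lner C*-algebra is by definition a F\o lner C*-algebra. I therefore concentrate on the forward direction. Starting from u.c.p. maps $\varphi_n\colon\cA\to M_{k(n)}(\C)$ satisfying the approximate multiplicativity \eqref{eq:mult-2} in the normalized trace $2$-norm---which need not be asymptotically isometric---the plan is to manufacture a second, auxiliary family of u.c.p. maps that \emph{is} asymptotically isometric, and then to splice the two families together through direct sums with carefully chosen unequal multiplicities, in such a way that the good $2$-norm multiplicativity of the $\varphi_n$ survives while the operator norm is anchored by the auxiliary family.

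For the auxiliary family I would invoke separability to fix a faithful unital representation $\pi\colon\cA\to\cL(\cH_0)$ on a separable Hilbert space together with an increasing sequence $\{Q_n\}\subset\cPfin(\cH_0)$ with $Q_n\strongtends\1$, and set $\psi_n(A):=Q_n\pi(A)Q_n$, regarded as an element of $\cL(Q_n\cH_0)\cong M_{m(n)}(\C)$ with $m(n):=\rank Q_n$. These compressions are u.c.p., and a routine strong-convergence argument shows $\|\psi_n(A)\|\to\|A\|$: for given $\eps>0$ choose a unit vector $\xi$ with $\|\pi(A)\xi\|>\|A\|-\eps$ and test $\psi_n(A)$ against the nearly unit vectors $Q_n\xi$, noting that $Q_n\pi(A)Q_n\xi\to\pi(A)\xi$. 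The family $\{\psi_n\}$ is thus asymptotically isometric; it is far from multiplicative, but its defect is norm-bounded, $\|\psi_n(AB)-\psi_n(A)\psi_n(B)\|\le 2\|A\|\,\|B\|$.

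The splicing step carries the real content. I would fix an integer sequence $N(n)$ with $N(n)k(n)\ge n\,m(n)$ and put
\[
\Phi_n:=\varphi_n^{\oplus N(n)}\oplus\psi_n\colon\cA\longrightarrow M_{N(n)k(n)+m(n)}(\C)\,,
\]
where $\varphi_n^{\oplus N(n)}$ denotes $N(n)$ diagonal copies of $\varphi_n$. Each $\Phi_n$ is u.c.p., and the asymptotic isometry \eqref{eq:norm} is then immediate, since $\|\Phi_n(A)\|=\max(\|\varphi_n(A)\|,\|\psi_n(A)\|)\to\|A\|$ (the $\varphi_n$ are contractive and the $\psi_n$ already achieve the norm). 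For \eqref{eq:mult-2} I would compute the normalized trace $2$-norm of the defect $(\delta_n^\varphi)^{\oplus N(n)}\oplus\delta_n^\psi$, where $\delta_n^\varphi,\delta_n^\psi$ are the multiplicativity defects of $\varphi_n,\psi_n$, obtaining
\[
\|\Phi_n(AB)-\Phi_n(A)\Phi_n(B)\|_{2,\mathrm{tr}}^2
=\frac{N(n)k(n)\,\|\delta_n^\varphi\|_{2,\mathrm{tr}}^2+m(n)\,\|\delta_n^\psi\|_{2,\mathrm{tr}}^2}{N(n)k(n)+m(n)}\,.
\]
The first summand is at most $\|\delta_n^\varphi\|_{2,\mathrm{tr}}^2\to 0$ by \eqref{eq:mult-2}, while the second is at most $\tfrac{m(n)}{N(n)k(n)}\,\|\delta_n^\psi\|_{2,\mathrm{tr}}^2\le\tfrac1n\,(2\|A\|\,\|B\|)^2\to 0$; hence \eqref{eq:mult-2} holds for $\{\Phi_n\}$ and $\cA$ is a proper F\o lner C*-algebra.

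The main obstacle, and the heart of the trick, is exactly the bookkeeping of the normalized trace $2$-norm across a direct sum of blocks of wildly different sizes. The key points to verify are that duplicating $\varphi_n$ leaves its normalized $2$-norm unchanged, $\|(\delta_n^\varphi)^{\oplus N}\|_{2,\mathrm{tr}}=\|\delta_n^\varphi\|_{2,\mathrm{tr}}$, so that the approximate multiplicativity is not diluted by the copies, whereas the non-multiplicative block $\psi_n$ enters with relative weight $m(n)/(N(n)k(n)+m(n))\le 1/n$, so that its bounded defect is washed out. The only quantitative choice is making $N(n)$ outpace $m(n)/k(n)$; everything else is soft.
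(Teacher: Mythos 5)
Your proof is correct and is essentially the paper's own argument: the paper likewise pads $\varphi_n$ with enough diagonal copies of itself (so that the matrix size dominates the rank of the compression block) and then takes the direct sum with compressions $P_n\pi(A)P_n$ of a faithful representation, bounding the compression block's multiplicativity defect by its operator norm times its vanishing relative trace weight. The only cosmetic difference is that the paper absorbs the multiplicity into $\varphi_n$ first (arranging $n/k(n)\to 0$) rather than carrying $N(n)$ explicitly through the final estimate.
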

\begin{proof}
Assume that $\cA$ is a F\o lner C*-algebra, and let $\varphi_n
\colon \cA \to M_{k(n)}(\C )$ be a sequence of u.c.p. maps such that
(\ref{eq:mult-2}) holds. Considering the direct sum of a
sufficiently large number of copies of $\varphi_n$, for each $n$, we
may assume that
\begin{equation}
\label{eq:(n)to0} \lim _{n\to \infty} \frac{n}{k(n)}= 0.
\end{equation}
Let $\pi \colon \cA \to \cL (\cH )$ be a faithful representation of
$\cA$ on a separable Hilbert space $\cH$. Let $\{ P_n\}_n$ be an
increasing sequence of orthogonal projections on $\cH$, converging
to $\1$ in the strong operator topology and such that $\dim
P_n(\cH ) =n$ for all $n$. Then for all $A\in \cA$ we have $\| A\|
=\lim _n \| P_n\pi (A) P_n \| $.  Let $\psi _n \colon \cA \to
M_{k(n)+n}(\C) $ be given by:
$$
\psi _n (A) = \varphi _n (A) \oplus P_n \pi (A) P_n , \qquad A\in \cA.
$$
Then $\psi _n$ is a u.c.p. map. For $A,B\in \cA$,
set $X_n= P_n \pi (A)(1-P_n)\pi (B) P_n $. Then we have
\begin{align*}
\| \psi _n (AB) -\psi _n(A) & \psi _n(B)  \| _{2, \mathrm{tr}}^2 \le
\| \varphi _n (AB) -\varphi _n(A) \varphi _n(B) \| _{2,
\mathrm{tr}}^2
+ \frac{ \mathrm{Tr} (X_n^*X_n)}{k(n)+n}\\
& \le \| \varphi _n (AB) -\varphi _n(A) \varphi _n(B) \| _{2,
\mathrm{tr}}^2 + \frac{n \|A\|^2  \|B\|^2}{k(n)+n} \;.
\end{align*}
Using (\ref{eq:(n)to0}) we get
\begin{equation*}
\lim_n\|\psi_n(AB)-\psi_n(A)\psi_n(B)\|_{2,\mathrm{tr}}=0.
\end{equation*}
On the other hand, for $A\in \cA$, we have
$$
\|A \| -\|\psi _n (A) \|  \le  \| A \| - \| P_n\pi (A)P_n\| \to 0
$$
so that (\ref{eq:norm}) holds for the sequence $(\psi _n )$. This
concludes the proof.
\end{proof}

For the next result recall that a representation $\pi$ of an
abstract C*-algebra $\cA$ on a Hilbert space $\cH$ is called {\em
essential} if $\pi(\cA)$ contains no nonzero compact operators.
The proof uses the same approximation technique as the proof of
Theorem~\ref{teo:apply} (see Theorem~3.4 in \cite{AraLledo12}
for details).

\begin{thm} \label{thm:charFoe} Let $\cA$ be a unital
separable C*-algebra. Then the following conditions are equivalent:
\begin{itemize}
 \item[(i)] There exists a faithful representation $\pi\colon\cA\to\cL(\cH)$ such that $\pi(\cA)$ has a
  F\o lner sequence.
 \item[(ii)] There exists a faithful essential representation $\pi\colon\cA\to\cL(\cH)$ such that $\pi(\cA)$ has a
  F\o lner sequence.
 \item[(iii)] Every faithful essential representation $\pi\colon\cA\to\cL(\cH)$ satisfies that $\pi(\cA)$ has a
 proper F\o lner sequence.
 \item[(iv)] There exists a non-zero representation $\pi\colon\cA\to\cL(\cH)$ such that $\pi(\cA)$ has an amenable trace.
 \item[(v)] Every faithful representation $\pi\colon\cA\to\cL(\cH)$ satisfies that $\pi(\cA)$ has an ame\-nable trace.
 \item[(vi)] $\cA$ is a F\o lner C*-algebra.
\end{itemize}
 \end{thm}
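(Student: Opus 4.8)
The plan is to prove all six statements equivalent through the cycle
$(vi)\Rightarrow(iii)\Rightarrow(ii)\Rightarrow(i)\Rightarrow(iv)\Rightarrow(vi)$, supplemented by the side loop $(vi)\Rightarrow(v)\Rightarrow(iv)$. Several links are formal. For $(iii)\Rightarrow(ii)$ I would note that a proper F\o lner sequence is a F\o lner sequence and that every unital separable C*-algebra admits at least one faithful essential representation (amplify any faithful representation $\rho$ to $\rho\otimes\mathrm{id}_{\ell^2(\N)}$, whose image contains no nonzero compact operator); applying $(iii)$ to such a representation gives $(ii)$. Then $(ii)\Rightarrow(i)$ is trivial, $(i)\Rightarrow(iv)$ is Proposition~\ref{exercise}~(i) together with the remark that a faithful representation is non-zero, and $(v)\Rightarrow(iv)$ follows by picking any faithful (hence non-zero) representation.

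The first real step is $(iv)\Rightarrow(vi)$, which I would carry out using only the tools of the excerpt. Given a non-zero $\pi$ with an amenable trace on $\pi(\cA)$, Proposition~\ref{pro:hypertrace} yields a F\o lner sequence $\{P_n\}_n$ for $\pi(\cA)\subset\cL(\cH)$. Compression to the ranges gives u.c.p. maps $\varphi_n\colon\pi(\cA)\to\cL(P_n\cH)\cong M_{d_n}(\C)$, $\varphi_n(T)=P_nTP_n$, and using $(I-P_n)TP_n=[T,P_n]P_n$ one computes
\[
\|\varphi_n(ST)-\varphi_n(S)\varphi_n(T)\|_{2,\mathrm{tr}}
=\frac{\|P_nS(I-P_n)TP_n\|_2}{\|P_n\|_2}
\le \|S\|\,\frac{\|[T,P_n]\|_2}{\|P_n\|_2}\longrightarrow 0 .
\]
Precomposing with the quotient $*$-homomorphism $\pi\colon\cA\to\pi(\cA)$ produces u.c.p. maps $\varphi_n\circ\pi$ on $\cA$ satisfying \eqref{eq:mult-2}, so $\cA$ is a F\o lner C*-algebra.

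The main obstacle is the return direction $(vi)\Rightarrow(v)$ (and $(vi)\Rightarrow(iii)$), where the abstract matricial condition must be converted into a hypertrace on a concrete representation. Fixing a faithful $\pi\colon\cA\to\cL(\cH)$ and the maps $\varphi_n$ from $(vi)$, I would first take a weak*-cluster point $\tau$ of the states $a\mapsto\mathrm{tr}(\varphi_n(a))$; since $|\mathrm{tr}(F)|\le\|F\|_{2,\mathrm{tr}}$, condition \eqref{eq:mult-2} forces $\tau(ab)=\tau(ba)$, so $\tau$ is a trace. To make it amenable I would extend each $\varphi_n\circ\pi^{-1}$ from $\pi(\cA)$ to a u.c.p. map $\Phi_n\colon\cL(\cH)\to M_{k(n)}(\C)$ by Arveson's extension theorem, set $\omega_n:=\mathrm{tr}\circ\Phi_n$, and take a weak*-cluster point $\psi$ on $\cL(\cH)$, so that $\psi\!\restriction_{\pi(\cA)}=\tau\circ\pi^{-1}$. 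The crux is the multiplicative-domain estimate for u.c.p. maps: the Stinespring/Cauchy--Schwarz inequality bounds $\|\Phi_n(XT)-\Phi_n(X)\varphi_n(\pi^{-1}(T))\|_{2,\mathrm{tr}}$ by $\|X\|$ times the square root of the defect $\|\varphi_n(\pi^{-1}(T^*T))-\varphi_n(\pi^{-1}(T))^*\varphi_n(\pi^{-1}(T))\|_{2,\mathrm{tr}}$, which tends to $0$ by \eqref{eq:mult-2}. Combined with the trace property of $\mathrm{tr}$ on $M_{k(n)}(\C)$, this gives $\omega_n(XT)-\omega_n(TX)\to 0$, hence $\psi(XT)=\psi(TX)$ for all $X\in\cL(\cH)$, $T\in\pi(\cA)$. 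Thus $\psi$ is a hypertrace and $\pi(\cA)$ carries an amenable trace, which is exactly $(v)$.

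Finally, for $(vi)\Rightarrow(iii)$ I would run the same construction with $\pi$ chosen faithful \emph{and} essential, so that $\pi(\cA)\cap\cK(\cH)=\{0\}$; the amenable trace just produced then feeds into Theorem~\ref{teo:apply}, upgrading it to a proper F\o lner sequence for $\pi(\cA)$ and yielding $(iii)$. The side loop is closed by $(v)\Rightarrow(iv)$ above, so the whole diagram is connected. I expect the weak*-compactness and the compression/quotient bookkeeping to be routine; the genuinely delicate point, and the same one that underlies the proof of Theorem~\ref{teo:apply}, is the multiplicative-domain inequality that controls the defect $\|\varphi_n(T^*T)-\varphi_n(T)^*\varphi_n(T)\|_{2,\mathrm{tr}}$ and thereby transfers asymptotic multiplicativity through the Arveson extension to the trace property of $\psi$.
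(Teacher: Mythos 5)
Your argument is correct, and it follows the route the paper itself points to: the paper does not print a proof of Theorem~\ref{thm:charFoe} but defers to Theorem~3.4 of \cite{AraLledo12}, describing it as the same approximation technique as Theorem~\ref{teo:apply}, which is exactly how you close the loop (compression along a F\o lner sequence for (iv)$\Rightarrow$(vi); Arveson extension plus the multiplicative-domain/Cauchy--Schwarz estimate and a weak* cluster point for (vi)$\Rightarrow$(v); Theorem~\ref{teo:apply} to upgrade to a proper F\o lner sequence in an essential representation for (vi)$\Rightarrow$(iii)). The only points left implicit are routine: the H\"older-type inequalities $\|AB\|_{2,\mathrm{tr}}\le\|A\|\,\|B\|_{2,\mathrm{tr}}$ and $\mathrm{tr}(P)\le\|P\|_{2,\mathrm{tr}}$ for $P\ge 0$ needed in the multiplicative-domain step, and the fact that the defect estimate must be applied on both sides ($XT$ and $TX$) before comparing traces.
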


\begin{rem}
\begin{itemize}
 \item[(i)]
The class of C*-algebras introduced in this section has been considered before by B\'edos.
In \cite{Bedos95} the author defines a C*-algebra $\cA$ to be {\em weakly hypertracial}
if $\cA$ has a non-degenerate representation
$\pi$ such that $\pi(\cA)$ has a hypertrace.
In this sense, the preceding theorem gives a new characterization of weakly hypertracial
C*-algebras in terms of u.c.p. maps.
 \item[(ii)] Note also that the equivalences between (i), (iv) and (v)
in Theorem~\ref{thm:charFoe} are basically known
(see \cite{Bedos95}).
\end{itemize}

\end{rem}

We conclude mentioning that in
the study of growth properties of C*-algebras (and motivated
by previous work done by Arveson and B\'edos)
Vaillant defined the following
natural unital C*-algebra (see Section~3 in \cite{Vaillant96}):
given an increasing sequence $\cP:=\{P_n\}_n\subset\cPfin(\cH)$
of orthogonal finite rank projections strongly converging to
$\1$, consider the set of all bounded
linear operators in $\cH$ that have $\cP$ as a proper F\o lner sequence, i.e.,
\[
 \cF_{\cP}(\cH):=\left\{
                 X\in \cL(\cH) \mid
                 \lim_{n\to\infty} \frac{\|X P_n-P_n X\|_2}{\|P_n\|_2} = 0
                 \right\}\;.
\]
This unital C*-subalgebra of $\cL(\cH)$ (called F\o lner algebra by Hagen, Roch
and Silbermann in Section~7.2.1 of \cite{bHagen01}) has shown to be very
useful in the analysis of the classical Szeg\"o limit theorems for
Toeplitz operators and some generalizations of them
(see, e.g., Section~7.2 of \cite{bHagen01} and \cite{Boettcher05}).

The C*-algebra $\cF_{\cP}$ is always non-separable for the operator
norm. Indeed, consider the $\ell^\infty$-direct sum of matrix
algebras $\cA=\prod_i M_{n_i}(\C)$, where $n_i$ are the ranks of the
orthogonal projections $P_{i+1}-P_i$, $i\in\N$,  with norm given by
$\| (a_i)\|=\sup_i \|a_i\|$. It is clear that $\cA$ is not
separable, and the elements of $\cA$ can be seen inside $\cF_{\cP}$
as block-diagonal operators, so the algebra $\cF _{\cP}$ is also
non-separable.

\section{Final remarks: F\o lner versus proper F\o lner}\label{sec:conclusion}

As was mentioned at the beginning of Section~\ref{sec:basic}, F\o lner sequences 
appeared first in the context of groups. Note that if
countable discrete group $\Gamma$ has a F\o lner sequence one can
always find another F\o lner sequence which, in addition to
Eq.~(\ref{eq:foelner-g}), is also proper, i.e.,
$\Gamma_i\subset\Gamma_j$ if $i\leq j$ and $\Gamma=\cup_i \Gamma_i$.
In the context of operators and due to the linear structure of
the underlying Hilbert spaces the difference between F\o lner sequence and
proper F\o lner sequence is relevant.
As was mentioned after Proposition~\ref{prop:decomp} if
$T= T_0\oplus T_1$ is a finite block reducible operator
on the Hilbert space $\cH=\cH_0\oplus\cH_1$, with $1\le \dim \cH_0<\infty$, and
$T_1$ strongly non-F\o lner (cf.~Subsection~\ref{sbsc:strg-NF}), then 
$T$ has an obvious constant F\o lner sequence but can not have a proper 
F\o lner sequence. Moreover, Proposition~\ref{prop:splitting} shows that
the difference between F\o lner and proper F\o lner sequence for single
operators can only appear in the case when there is a 
non-trivial finite dimensional invariant subspace.

At the level of abstract C*-algebras Proposition~\ref{prop:F=PF} shows that
F\o lner C*-algebras and proper F\o lner C*-algebras define the same class of
unital separable C*-algebras. Note that by Theorem~\ref{thm:charFoe}~(i)
the direct sum of a matrix algebra and the Cuntz algebra
\[
 \cA:=M_n(\C)\oplus\cO_n
\]
is a F\o lner (hence proper F\o lner) C*-algebra. But in its natural 
representation on $\cH:=\C^n\oplus\ell_2$ this algebra can not have
a proper F\o lner sequence because the representation is not essential
(see Theorem~\ref{thm:charFoe}~(iii)).

Finally, if $\cB$ is a {\em unital} C*-subalgebra of a F\o lner C*-algebra $\cA$,
then one can restrict the u.c.p.~maps of $\cA$ to $\cB$ to show that $\cB$ is also 
a F\o lner C*-algebra. This is not true if $\cB$ is a non-unital 
C*-subalgebra (i.e., if $\1_{\cA}\notin\cB$). Consider, for example, the concrete
C*-algebra on $\cK:=\ell_2\oplus\cH$ given by
\[
 \cA:=C^*(S)\oplus C^*(T_1)\;,
\]
where $S$ is the unilateral shift and $T_1$ is a strongly non-F\o lner operator.
Then, again, $\cA$ is a F\o lner (hence proper F\o lner) C*-algebra, but the 
non-unital C*-subalgebra $\cB:=0\oplus C^*(T_1)$ is not a F\o lner C*-algebra.

\subsection*{Acknowledgment}
The two first-named authors would like to acknowledge the {\em Instituto Superior T\'ec\-nico, Lisboa,}
for the invitation and hospitality during the {\em Workshop on Operator Theory and Operator Algebras} 2012.


\end{document}